\newcommand{\one}{\mathbf{1}}
\newcommand{\I}{\mathrm{I}}
\newcommand{\vvec}{\mathrm{vec}}
\newcommand{\subA}{\hat{A}}
\begin{document}

\title{PDOT: a Practical Primal-Dual Algorithm and a GPU-Based Solver for Optimal Transport}
\author{Haihao Lu\thanks{MIT, Sloan School of Management (haihao@mit.edu).} \and Jinwen Yang\thanks{University of Chicago, Department of Statistics (jinweny@uchicago.edu).}}
\date{}

\maketitle
\begin{abstract}
    In this paper, we propose a practical primal-dual algorithm with theoretical guarantees and develop a GPU-based solver, which we dub PDOT, for solving large-scale optimal transport problems. Compared to Sinkhorn algorithm or classic LP algorithms, PDOT can achieve high-accuracy solution while efficiently taking advantage of modern computing architecture, i.e., GPUs. On the theoretical side, we show that PDOT has a data-independent $\widetilde O(mn(m+n)^{1.5}\log(\frac{1}{\epsilon}))$ local flop complexity where $\epsilon$ is the desired accuracy, and $m$ and $n$ are the dimension of the original and target distribution, respectively. We further present a data-dependent $\widetilde O(mn(m+n)^{3.5}\Delta + mn(m+n)^{1.5}\log(\frac{1}{\epsilon}))$ global flop complexity of PDOT, where $\Delta$ is the precision of the data. On the numerical side, we present PDOT, an open-source GPU solver based on the proposed algorithm. Our extensive numerical experiments consistently demonstrate the well balance of PDOT in computing efficiency and accuracy of the solution, compared to Gurobi and Sinkhorn algorithms.
\end{abstract}
\section{Introduction}
Optimal transport (OT), also known as mass transportation, the earth mover's distance, and the Wasserstein-1 ($W_{1}$) distance, represents a crucial class of optimization problems that quantify the distance between probability distributions~\cite{villani2009optimal,villani2021topics}. Originating in the 18th century through Monge's pioneering work~\cite{monge1781founding}, OT has found extensive applications across various fields including operations research~\cite{blanchet2023unifying,tacskesen2023semi,shafieezadeh2023new,blanchet2022optimal,si2021testing}, economics~\cite{galichon2018optimal,fajgelbaum2020optimal,daskalakis2013mechanism,torous2021optimal}, machine learning~\cite{courty2014domain,courty2016optimal,xu2019gromov,chen2019improving}, computer vision and image processing~\cite{rabin2014adaptive,ge2021ota,liu2020semantic,solomon2014earth,rubner2000earth}, biology~\cite{schiebinger2019optimal,moriel2021novosparc,huizing2022optimal,demetci2020gromov,cao2022unified}, and quantum mechanics~\cite{ikeda2020foundation,santambrogio2015optimal}, among others.

In this paper, we focus on solving the discrete optimal transport (OT) problem, initially formulated by Kantorovich~\cite{kantorovich1942translocation}. The discrete OT problem is mathematically expressed as:
\begin{equation}\label{eq:ot}
    \begin{aligned}
        \min_{X \geq 0} & \ \langle C, X \rangle \\ 
        \mathrm{s.t.} & \ X \mathbf{1}_n = f \\ 
        & \ X^\top \mathbf{1}_m = g \ ,
    \end{aligned}
\end{equation}
where $C = [C_{ij}]_{1 \leq i \leq m, 1 \leq j \leq n}$ denotes the given non-negative cost matrix, and $f = [f_i]_{i=1}^m$ and $g = [g_j]_{j=1}^n$ are probability vectors representing the row and column marginals, respectively. Here, $\mathbf{1}_n$ and $\mathbf{1}_m$ are vectors of ones with dimensions $n$ and $m$. The objective is to find a non-negative matrix $X$, also called transportation plan in optimal transport, that minimizes the total transportation cost $\langle C, X \rangle$, subject to the constraints that the marginals of $X$ match the given vectors $f$ and $g$ that are in general distributions.

The current practical approaches to solving optimal transport (OT) problems encompass a range of classical and modern methods. One classical approach that is particularly suitable for OT is the network simplex algorithm, by noticing OT is a network flow problem. The network simplex algorithm, an adaptation of the simplex method for network flow problems, is robust and provides exact solutions~\cite{orlin1997polynomial}. This algorithm operates by exploiting the network structure of the OT problem, iterating through potential solutions by traversing the edges of the feasible region's polytope. It is particularly effective for small-to-medium scale problems but can be computationally intensive and slow for very large datasets. Moreover, the necessity of frequent linear system solving, as well as corresponding matrix factorization, impedes its efficient implementation on GPU. In contrast, first-order methods such as the Sinkhorn algorithm (also known as matrix scaling) have gained popularity due to their scalability and efficiency. The Sinkhorn algorithm approximates the OT problem by adding an entropy regularization term, making the problem easier to solve using iterative matrix scaling~\cite{cuturi2013sinkhorn}. This method is highly parallelizable, which makes it suitable for large-scale applications as in machine learning and image processing. However, the trade-off is that the approximation depends on the chosen penalty parameter. This incurs at least two well-known drawbacks of Sinkhorn: (i) Sinkhorn is numerically unstable, even for relatively small penalty parameter. The numerical issue can be alleviated to some extent by stabilization techniques recently developed in \cite{schmitzer2019stabilized}. (ii) Sinkhorn often provides low-accuracy solution to original optimal transport problems. An extremely small penalty parameter is required to achieve high-accuracy, while this approach faces numerical issue and significantly slowdowns the convergence. Overall, while classical methods offer precise solutions, their computational cost and lack of efficient parallel implementation can be prohibitive for large datasets, whereas Sinkhorn method provides a more practical efficiency for large-scale OT problems with the cost of providing low-quality solutions.

\begin{table}
\centering
\includegraphics[width=0.7\textwidth]{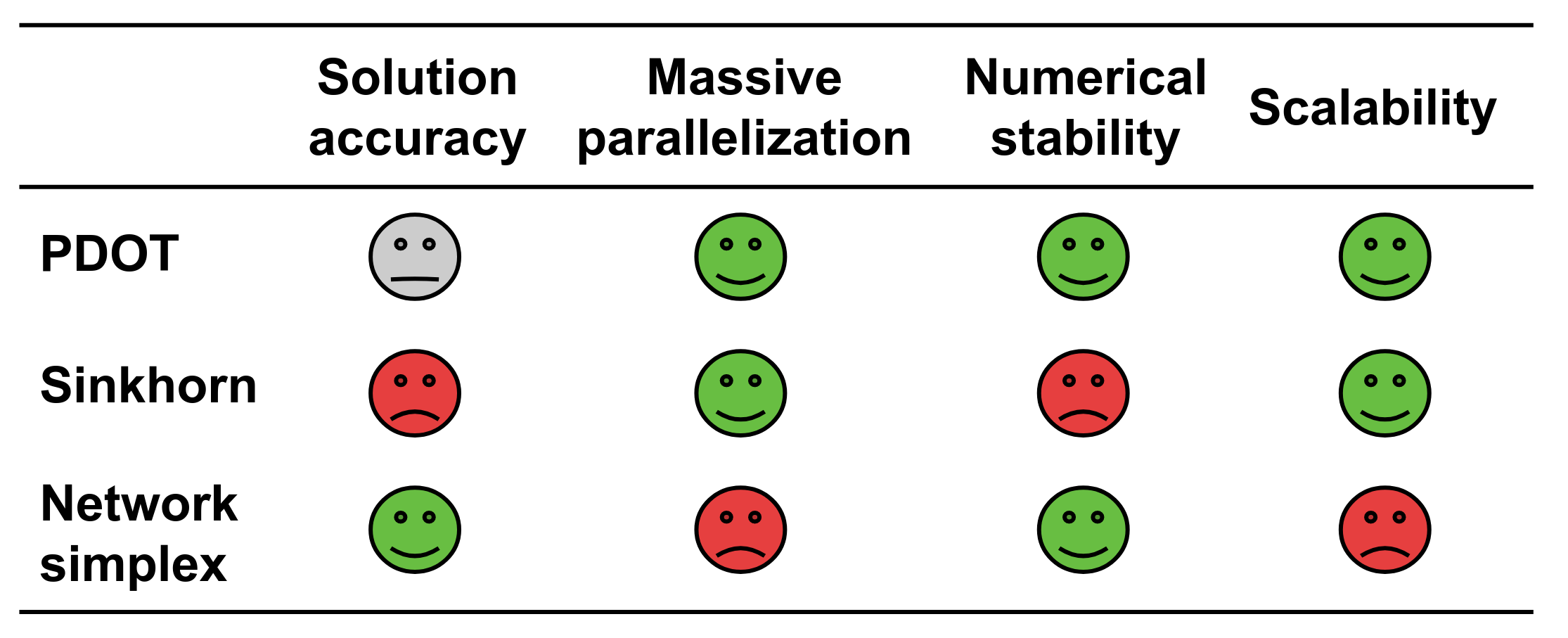}
\caption{Comparison of PDOT, Sinkhorn and network simplex in four dimensions, where green, gray, and red faces represent a decreasing order of favorableness of the corresponding method in the corresponding dimension.} 
\label{tab:compare} 
\end{table}

This paper aims to address the following natural questions:
\vspace{0.2cm}
\begin{center}
    \textit{Is there a practical algorithm for large-scale OT problems that can achieve high-accuracy solutions and be efficiently implemented on GPUs?}
\end{center}
\vspace{0.2cm}

We provide an affirmative answer to the above question by presenting PDOT, a practical primal-dual algorithm and a GPU-based solver for large-scale OT problems. Table \ref{tab:compare} summarizes the comparison of PDOT, Sinkhorn and network simplex method in four different dimensions: solution accuracy,  whether they can take advantage of massive parallelization, and whether they are numerically stable and scalability. Generally speaking, PDOT inherits the advantage of Sinkhorn (since the computational bottleneck for both algorithms are matrix-vector multiplications) while is able to achieve higher-accuracy solutions.

The contributions of the work can be summarized as follows:
\begin{itemize}
    \item We propose using restarted PDHG for solving large-scale optimal transport problems (Section \ref{sec:rpdhg}). The algorithm is adapted to the specific structure of OT problems, leading to an efficient implementation on GPUs without constructing the huge constrained matrix.
    \item We show that restarted PDHG exhibits fast local linear convergence after identification (Section \ref{sec:local-rate}, with total flop count $\widetilde O(mn(m+n)^{1.5}\log\frac{1}{\epsilon})$ while the iteration complexity and parallel depth\footnote{Parallel depth, also known as span in parallel computing, represents how much of our algorithm must still happen in serial even with infinite processors~\cite{blelloch1996programming}. This concept has been used in analyzing the flop complexity of OT algorithms.}
    are merely $\widetilde O((m+n)^{1.5}\log\frac{1}{\epsilon})$. We further establish $\widetilde O((m+n)^{3.5}\Delta+(m+n)^{1.5}\log\frac{1}{\epsilon})$ global iteration complexity and parallel depth when the data are rational numbers with $\widetilde O(mn(m+n)^{3.5}\Delta+mn(m+n)^{1.5}\log\frac{1}{\epsilon})$ total flop count, where $\Delta$ is the precision of the data (Section \ref{sec:global-rate}).
    \item We develop PDOT, a GPU-implemented solver based on restarted PDHG, for solving large-scale optimal transport. Extensive numerical experiments demonstrate the efficiency and effectiveness of the solver compared to Sinkhorn algorithm and state-of-the-art LP solvers (Section \ref{sec:numerical}).
\end{itemize}

The development of PDOT is part of a broader trend towards matrix-free solvers that employ first-order methods (FOMs) for large-scale optimization problems, outpacing the capabilities of traditional methods such as simplex and barrier algorithms. Several matrix-free solvers have been developed in this research area. Examples include \href{https://github.com/google/or-tools/tree/stable/ortools/pdlp}{PDLP}/cuPDLP (\href{https://github.com/jinwen-yang/cuPDLP.jl}{cuPDLP.jl}, \href{https://github.com/COPT-Public/cuPDLP-C}{cuPDLP-C})~\cite{applegate2021practical,lu2023cupdlp,lu2023cupdlpc}, which are based on the primal-dual hybrid gradient (PDHG) method, and the matrix-free interior point method (IPM) solver \href{https://github.com/leavesgrp/ABIP}{ABIP}~\cite{lin2021admm,deng2022new}. Other notable solvers include the dual-based LP solver ECLIPSE~\cite{basu2020eclipse}, and general conic solver \href{https://github.com/cvxgrp/scs}{SCS}~\cite{o2016conic,o2021operator}, and quadratic programming solver \href{https://github.com/osqp/osqp}{OSQP}~\cite{stellato2020osqp}, which employ the alternating direction method of multipliers (ADMM). Efficient implementation of FOM-based solvers on GPUs has demonstrated performance comparable to state-of-the-art commercial solvers on standard benchmark sets, achieving high-quality solutions within reasonable time~\cite{lu2023cupdlp,lu2023cupdlpc}. This highlights the potential and effectiveness of FOM-based approaches in addressing large-scale problems.

% Lastly, we emphasis that the goal of this work is to propose a practical algorithm with theoretical guarantees and to implement a high-performance solver for large-scale OT problems instead of proposing an algorithm that improves known theoretical complexity.

\subsection{Related literature}

{\bf Classic methods for optimal transport.} Algorithms for solving discrete optimal transport problems are fundamental in various applications that requires efficient methods to determine optimal transport plans. Network simplex method is a powerful adaptation of the simplex method for network flow problems to find the optimal flow that minimizes the cost~\cite{papadimitriou2013combinatorial,ahuja1993network,orlin1997polynomial}. Hungarian algorithm, also known as Kuhn-Munkres algorithm, addresses OT as an assignment problem for bipartite graphs~ \cite{kuhn1955hungarian,munkres1957algorithms}. Auction algorithm, proposed in \cite{bertsekas1992auction, bertsekas1988auction}, offers an iterative bidding approach to the assignment problem. More recently, a variant of interior-point method is proposed to effectively solve LPs arising in OT~\cite{zanetti2023interior}. We refer to \cite[Chapter 3]{peyre2019computational} for more detailed discussions on classic approaches on discrete OT.

{\bf First-order algorithms for optimal transport.} First-order methods (FOMs) have proven to be highly effective for solving large-scale optimal transport problems. FOMs are highly parallelizable and enjoy low per-iteration cost, significantly reducing computational time for large datasets, although it produces approximate rather than exact solutions. The Sinkhorn algorithm, originally proposed by Sinkhorn \cite{sinkhorn1967diagonal}, introduces entropic regularization to the optimal transport problem, making it more tractable through iterative matrix scaling and allowing for fast computation of transport plans \cite{cuturi2013sinkhorn}. Later on in \cite{mishchenko2019sinkhorn,leger2021gradient,aubin2022mirror}, it is shown that Sinkhorn can be interpreted as mirror descent on the dual problem under relative smoothness~\cite{lu2018relatively}. Empirically it is well-known that vanilla Sinkhorn has numerical issues with small regularization parameters, and variants of Sinkhorn are proposed to improve its numerical stability~\cite{schmitzer2019stabilized} and boost its practical performance~\cite{altschuler2017near,scetbon2021low}. Furthermore, many existing works apply other first-order methods for solving discrete OT problems, such as accelerated gradient descent, stochastic gradient descent, iterative Bregman method, mirror descent and operator splitting~\cite{dvurechensky2018computational, genevay2016stochastic,benamou2015iterative,lin2019efficient,mai2021fast,chambolle2022accelerated}. Several OT solvers have already been off-the-shelf with various algorithmic options, such as \href{https://pythonot.github.io/}{POT} in Python~\cite{flamary2021pot}, \href{https://ott-jax.readthedocs.io/en/latest/}{OTT} in JAX~\cite{cuturi2022optimal}, \href{https://github.com/JuliaOptimalTransport/OptimalTransport.jl}{OptimalTransport.jl} in Julia, etc. For more detailed discussions, we refer to \cite[Chapter 4]{peyre2019computational}.

Another active line of research focuses on improving the theoretical complexity of solving OT, see for example~\cite{altschuler2017near,dvurechensky2018computational,blanchet2024towards,jambulapati2019direct,lin2022efficiency,quanrud2018approximating,van2020bipartite}. While we also present theoretical complexity result of our proposed algorithm, we emphasis that we aim to develop a practical algorithm with strong numerical performance and theoretical guarantees instead of improving the theoretical bound on existing literature.

{\bf FOM solver for linear programming.}
Recent researches have increasingly focused on the application of first-order methods (FOMs) for solving large-scale linear programming problems. This interest is driven by the low iteration costs and the potential for parallelization offered by these methods. Particularly, PDLP is a general-purpose large-scale LP solver that builds on the restarted PDHG algorithm~\cite{applegate2023faster} and incorporates numerous practical enhancements~\cite{applegate2021practical,lu2023cupdlp,lu2023cupdlpc}. The CPU implementation of PDLP, which is open-sourced via \href{https://developers.google.com/optimization}{Google OR-Tools}, has been shown to outperform other general-purpose FOM solvers for LPs~\cite{applegate2021practical}. Additionally, the GPU implementation, cuPDLP (\href{https://github.com/jinwen-yang/cuPDLP.jl}{cuPDLP.jl} and \href{https://github.com/COPT-Public/cuPDLP-C}{cuPDLP-C}), has demonstrated performance on par with commercial LP solvers such as Gurobi and COPT~\cite{lu2023cupdlp,lu2023cupdlpc}. See \cite{lu2024first} for a review on the recent development of first-order methods for LP.

\subsection{Notation}
Denote $\mathcal Z^*$ the primal-dual optimal solution set. Let $\|\cdot\|_2$ be the Euclidean norm for vectors and spectral norm for matrices, and $\|\cdot\|_F$ the Frobenius norm of matrices. $\|\cdot\|_1$ and $\|\cdot\|_\infty$ are the $\ell_1$ and $\ell_\infty$ norm of vectors respectively. The (Euclidean) distance between a point $v$ and a set $\mathcal U$ is defined as $\mathrm{dist}(v,\mathcal U):=\min_{u\in \mathcal U}\|v-u\|_2$. Define $[n]:=\{1,...,n\}$. For a vector $u$, denote $(u)_i$ its $i$-th element and $(u)_S:=((u)_i)_{i\in S}$ for index set $S$. We use big O notation to characterize functions according to their growth rate, particularly, $f(x)= O(g(x))$ means that for sufficiently large $x$, there exists constant $C$ such that $f(x)\leq Cg(x)$ while $f(x)=\widetilde O(g(x))$ suppresses the log dependence, i.e., $f(x)=O(g(x)\log(g(x))$.

\section{Preliminary}
In this section, we briefly revisit the current development of restarted PDHG for solving linear programming. Specifically, consider the standard linear programming
\begin{equation}\label{eq:standard-lp}
    \min_{x\geq 0}\ c^\top x\quad \mathrm{s.t.}\ Ax=b \ ,
\end{equation}
with primal-dual formulation
\begin{equation}\label{eq:standard-lp-minimax}
    \min_{x\geq 0}\max_{y}\ c^\top x-y^\top Ax+b^\top y \ .
\end{equation}
Algorithm \ref{alg:pdhg-lp} below demonstrates how restarted PDHG solves \eqref{eq:standard-lp-minimax}. For each outer loop $t$, we run vanilla PDHG on \eqref{eq:standard-lp-minimax} until a restart condition holds. When restart of outer loop happens, the initial solution is reset at the average iterates of last inner loop. 
\begin{algorithm}[ht!]
        \SetAlgoLined
        {\bf Input:} Initial point $(x^{0,0},y^{0,0})$, step-size $\eta, \sigma$\;
            \Repeat{\textit{$(x^{t,0}, y^{t,0})$ converges}}{
                {\bf initialize the inner loop.} inner loop counter $k \gets 0$\;

            \Repeat{a restart condition holds}{
                $x^{t,k+1}\gets \mathrm{Proj}_{\mathbb R_+^{n}}(x^{t,k}-\eta(c-A^\top y^{t,k}))$\;
                \vspace{0.075cm}
                $y^{t,k+1}\gets y^{t,k}+\sigma(b-A(2x^{t,k+1}-x^{t,k}))$\;
                \vspace{0.075cm}
                $(\bar x^{t,k+1},\bar y^{t,k+1})\gets \frac{k}{k+1}(\bar x^{t,k},\bar y^{t,k})+\frac{1}{k+1}(x^{t,k+1},y^{t,k+1})$\;
            }
        {\bf restart the outer loop.} $(x^{n+1,0}, y^{n+1,0})\gets (\bar x^{t,k},\bar y^{t,k})$, $n\gets n+1$\;
            }
        {\bf Output:} $(x^{t,0}, y^{t,0})$.
        \caption{Restarted PDHG on \eqref{eq:standard-lp}}
        \label{alg:pdhg-lp}
\end{algorithm}
We adopt the restart scheme with respect to KKT error of \eqref{eq:standard-lp}. Specifically, KKT error at solution $z=(x,y)$, with $\|z\|_2\leq R$,  is defined as $\mathrm{KKT}(z):=\mathrm{KKT}(x,y)=\left\|\begin{bmatrix}
        Ax-b \\ [A^\top y-c]^+ \\ \frac 1R[c^\top x-b^\top y]
    \end{bmatrix}\right\|_2$ which measures the primal infeasibility, dual infeasibility and duality gap. We restart the outer loop whenever the KKT error at running average has sufficient decay $\beta\in(0,1)$ from the initial KKT:
    \begin{equation}\label{eq:restart-condition-lp}
        \mathrm{KKT}(\bar x^{t,k},\bar y^{t,k})\leq \beta \mathrm{KKT}(x^{t,0},y^{t,0}) \ .
    \end{equation}

It turns out that restarted PDHG with restart scheme \eqref{eq:restart-condition-lp} exhibits strong theoretical guarantee for solving linear programming. We restate the convergence guarantee of restarted PDHG in the lemma below that establishes the linear rate to achieve $\epsilon$-accuracy. Without loss of generality, assume $\sigma=\eta$ throughout the rest of the paper, that is, we assume the primal and dual step-sizes are equal (this can be achieved without the loss of generality by rescaling the variables, see \cite{applegate2023faster} for a discussion).
\begin{lem}[{\cite[Theorem 1]{lu2023cupdlp}}]\label{lem:basic}
    Consider $\{z^{t,k}=(x^{t,k},y^{t,k})\}$ the restarted PDHG iterates with restart scheme \eqref{eq:restart-condition-lp} using decay $\beta\in(0,1)$ and step-size $\eta\leq \frac{1}{2\|A\|_2}$ for solving \eqref{eq:standard-lp-minimax}, where $R=2\|z^{0,0}-z^*\|_2+2\|z^*\|_2$ for one $z^*\in \mathcal Z^*$. Denote $\alpha>0$ the sharpness constant of the KKT system of \eqref{eq:standard-lp} on $B_R(0)$, namely
    \begin{equation*}
        \alpha\mathrm{dist}(z, \mathcal Z^*)\leq \mathrm{KKT}(z), \quad \forall z\in B_R(0) \ .
    \end{equation*}
    Then it holds that
    \begin{enumerate}
        \item[(i)] There exists an optimal solution $z^*=(x^*,y^*)$ such that $\lim_{t\rightarrow \infty}z^{t,0}=z^*$, and $\|z^{t,k}\|_2\leq R$ for any $t$ and $k$;
        \item[(ii)] The restart frequency $\tau^t\leq \frac{16\|A\|_2}{\beta\alpha}$; 
        \item[(iii)] The linear convergence holds: $\mathrm{dist}(z^{t,0},\mathcal Z^*)\leq \beta^{t}\frac{1}{\alpha}\mathrm{KKT}(z^{0,0})$.
    \end{enumerate}
\end{lem}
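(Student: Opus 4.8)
The plan is to treat restarted PDHG as an outer loop of geometric contraction built on top of the classical non-expansiveness of the inner PDHG updates. First I would set up the preconditioned-proximal-point viewpoint of Chambolle--Pock: with $\eta=\sigma$ and $\eta\le \frac{1}{2\|A\|_2}$, the matrix $M=\begin{bmatrix}\frac1\eta I & -A^\top\\ -A & \frac1\eta I\end{bmatrix}$ is positive definite with spectrum controlled by the singular values of $A$, so that $\frac{1}{2\eta}I\preceq M\preceq \frac{3}{2\eta}I$, and each inner update is the proximal step associated with $M$ applied to the saddle problem \eqref{eq:standard-lp-minimax}. The variational inequality for this step gives, for every $z^*\in\mathcal Z^*$, the contraction $\|z^{t,k+1}-z^*\|_M^2\le \|z^{t,k}-z^*\|_M^2-\|z^{t,k+1}-z^{t,k}\|_M^2$, so the inner iterates are Fej\'er monotone with respect to $\mathcal Z^*$ in the $M$-norm.

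From here I would prove statement (i). Since a restart resets $z^{t+1,0}$ to the ergodic average $\bar z^{t,k}$, which is a convex combination of inner iterates, convexity of $\|\cdot-z^*\|_M$ shows the averaged point is no farther from $z^*$ than $z^{t,0}$; hence $\|z^{t,k}-z^*\|_M$ is non-increasing across the entire inner-and-outer run. Converting to the Euclidean norm via the spectral bounds on $M$ gives $\|z^{t,k}-z^*\|_2\le \sqrt3\,\|z^{0,0}-z^*\|_2$, and the triangle inequality yields $\|z^{t,k}\|_2\le \|z^*\|_2+\sqrt3\,\|z^{0,0}-z^*\|_2\le 2\|z^*\|_2+2\|z^{0,0}-z^*\|_2=R$, which is exactly the definition of $R$ and certifies that the iterates never leave $B_R(0)$; this is what makes the sharpness hypothesis applicable throughout. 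Fej\'er monotonicity together with the geometric decay of $\mathrm{dist}(z^{t,0},\mathcal Z^*)$ proved below then forces $z^{t,0}$ to converge to a single limit $z^*\in\mathcal Z^*$.

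The core is the per-restart contraction, which I would obtain by combining a sublinear ergodic rate with sharpness. The $O(1/k)$ ergodic guarantee of PDHG \cite{applegate2023faster} bounds the normalized duality gap of $\bar z^{t,k}$ by a multiple of $\|z^{t,0}-z^*\|_M/k$; I would then use the two-sided relationship between the normalized duality gap and the $\mathrm{KKT}$ residual on $B_R(0)$ to transfer this into a bound $\mathrm{KKT}(\bar z^{t,k})\le C\|A\|_2\,\mathrm{dist}(z^{t,0},\mathcal Z^*)/k$ for an absolute constant $C$. Invoking sharpness $\alpha\,\mathrm{dist}(z^{t,0},\mathcal Z^*)\le \mathrm{KKT}(z^{t,0})$ at the start of the inner loop shows the restart test \eqref{eq:restart-condition-lp} is met once $k\ge 16\|A\|_2/(\beta\alpha)$, giving the restart-frequency bound (ii). Because each restart multiplies the KKT residual by at most $\beta$, we get $\mathrm{KKT}(z^{t,0})\le \beta^t\mathrm{KKT}(z^{0,0})$, and a final application of sharpness chains this into $\mathrm{dist}(z^{t,0},\mathcal Z^*)\le \frac1\alpha\mathrm{KKT}(z^{t,0})\le \beta^t\frac1\alpha\mathrm{KKT}(z^{0,0})$, establishing (iii).

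I expect the main obstacle to be the quantitative bookkeeping in the third step: the analysis runs most naturally with the normalized duality gap as the progress measure, whereas the algorithm restarts on the KKT residual, so I must control both directions of their equivalence uniformly over $B_R(0)$ and ensure the accumulated constants collapse to the claimed $16\|A\|_2/(\beta\alpha)$. Deriving the ergodic rate in the $M$-norm and then paying the norm-equivalence factors without loosening the constant beyond $16$ is the delicate part; the non-expansiveness and sharpness arguments are comparatively routine once boundedness in $B_R(0)$ has been secured.
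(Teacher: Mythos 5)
The paper gives no proof of this lemma --- it is imported verbatim from \cite[Theorem 1]{lu2023cupdlp} --- and your sketch follows essentially the same route as that source: Fej\'er monotonicity in the $M$-norm (with $\tfrac{1}{2\eta}I\preceq M\preceq \tfrac{3}{2\eta}I$) for boundedness in $B_R(0)$, the $O(1/k)$ ergodic rate combined with sharpness to bound the restart frequency, and telescoping the $\beta$-decay of the KKT residual across restarts for the linear rate. The step you flag as delicate (the uniform two-sided equivalence between the normalized duality gap and the KKT residual on $B_R(0)$, which is what produces the constant $16\|A\|_2/(\beta\alpha)$) is indeed where the cited proof does its quantitative bookkeeping, but your outline is otherwise sound and matches the standard argument.
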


\section{Restarted PDHG for optimal transport}\label{sec:rpdhg}

In this section, we demonstrate how to efficiently implement restarted PDHG (Algorithm \ref{alg:pdhg-lp}) for optimal transport problems \eqref{eq:ot} by utilizing its problem structure, and how to obtain a strictly feasible solution by rounding after a near feasible solution is obtained.

\subsection{Efficient implementation}
Optimal transport \eqref{eq:ot} can be recast as a standard form LP \eqref{eq:standard-lp} with
\begin{equation}\label{eq:ot-lp-data}
    c=\vvec(C)\in\mathbb R^{mn},\ b=\begin{bmatrix}
    f \\ g
\end{bmatrix}\in\mathbb R^{m+n},\  A=\begin{bmatrix}
        \one_n^\top \otimes \I_m \\ \I_n \otimes \one_m^\top
    \end{bmatrix}\in \mathbb R^{(m+n)\times mn} \ , 
\end{equation}
where $\otimes$ is the Kronecker product.
However, a direct implementation of Algorithm \ref{alg:pdhg-lp} requires explicitly formulating the huge constrained matrix $A\in R^{(m+n)\times mn}$, which can be memory-intensive and computationally expensive. To avoid this, we here propose an efficient implementation by utilizing the Kronecker product structure of the constrained matrix $A$. Specifically, we notice that the two matrix-vector multiplication steps in Algorithm \ref{alg:pdhg-lp} can be computed as  $Ax=\begin{bmatrix}
        X\one_n \\ X^\top \one_m
    \end{bmatrix}$ for any $x=\vvec(X)$, and $A^\top\begin{bmatrix}
        p \\ q
    \end{bmatrix}=p\one_n^\top+\one_mq^\top$ for vector $p$ and $q$. Inspired by such an efficient computation scheme, we introduce Algorithm \ref{alg:rpd} for solving optimal transport. Here $X^{t,k}$ represents the primal iterates (i.e., transportation plan), and $(p^{t,k},q^{t,k})$ stand for the dual iterates corresponding to row and column marginals respectively.

\begin{algorithm}[ht!]
        \SetAlgoLined
        {\bf Input:} Initial point $(X^{0,0},p^{0,0},q^{0,0})$, step-size $\eta, \sigma$\;
            \Repeat{\textit{$(X^{t,0}, p^{t,0}, q^{t,0})$ converges}}{
                {\bf initialize the inner loop.} inner loop counter $k \gets 0$\;

            \Repeat{a restart condition holds}{
                $X^{t,k+1}\gets \mathrm{Proj}_{\mathbb R_+^{m\times n}}(X^{t,k}-\eta(C-p^{t,k}\one_n^\top-\one_m {q^{t,k}}^\top))$\;
                \vspace{0.075cm}
                $p^{t,k+1}\gets p^{t,k}+\sigma(f-(2X^{t,k+1}-X^{t,k})\one_n)$\;
                \vspace{0.075cm}
                $q^{t,k+1}\gets q^{t,k}+\sigma(g-(2X^{t,k+1}-X^{t,k})^\top\one_m)$\;
                \vspace{0.075cm}
                $(\bar X^{t,k+1},\bar p^{t,k+1},\bar q^{t,k+1})\gets \frac{k}{k+1}(\bar X^{t,k},\bar p^{t,k},\bar q^{t,k})+\frac{1}{k+1}(X^{t,k+1},p^{t,k+1},q^{t,k+1})$\;
            }
        {\bf restart the outer loop.} $(X^{n+1,0}, p^{n+1,0}, q^{n+1,0})\gets (\bar X^{t,k},\bar p^{t,k},\bar q^{t,k})$, $n\gets n+1$\;
            }
        {\bf Output:} $(X^{t,0}, p^{t,0}, q^{t,0})$.
        \caption{Efficient implementation of restarted PDHG for optimal transport \eqref{eq:ot}}
        \label{alg:rpd}
\end{algorithm}
Similarly, the KKT error of optimal transport \eqref{eq:ot} can be efficiently evaluated as follows without constructing the constrained matrix $A$:
\begin{equation}
    \mathrm{KKT}(X,p,q)=\left\|\begin{bmatrix}
        X\one_n-f \\ 
        X^\top \one_m-g\\
        \vvec([p\one_n^\top+\one_m q^\top-C]^+) \\ \frac1R[\langle C,X\rangle-f^\top p-g^\top q]
    \end{bmatrix}\right\|_2 \ ,
\end{equation}
where $[W]^+=[\max\{W_{ij},0\}]_{1\leq i\leq m,1\leq j\leq n}$ is the element-wise positive part and $R$ is an upper bound on $\|(X^{t,k},p^{t,k},q^{t,k})\|_2$. As restart condition for LP \eqref{eq:restart-condition-lp}, we restart the algorithm when
\begin{equation}\label{eq:restart-condition}
    \mathrm{KKT}(\bar X^{t,k},\bar p^{t,k},\bar q^{t,k}) \leq \beta\mathrm{KKT}(X^{t,0},p^{t,0},q^{t,0}) \ .
\end{equation}

This efficient implementation is critical for Algorithm \ref{alg:rpd} to achieve strong numerical performance, particularly on GPU architecture, compared to a direct implementation of Algorithm \ref{alg:pdhg-lp}. Besides the memory-efficiency without constructing the huge sparse matrix $A$, the dense matrix operations in Algorithm \ref{alg:rpd} can also be more efficient on GPUs compared to the sparse sparse matrix-vector multiplication operations in a direct implementation of Algorithm \ref{alg:pdhg-lp}.

\subsection{Post-processing for obtaining feasible transport plan}
A feasible transportation plan is often favorable in real-world applications. Nevertheless, the output of Algorithm \ref{alg:rpd} is not guaranteed to lie in the feasible region, namely the constraints on marginal typically hold within a given tolerance. To obtain a feasible plan from the approximate solution of Algorithm \ref{alg:rpd}, we leverage the rounding algorithm (Algorithm \ref{alg:round}) as post-processing. Algorithm \ref{alg:round} is simple and parallelizable and is only called once after Algorithm \ref{alg:rpd} terminates.
\begin{algorithm}[ht!]
        \SetAlgoLined
        {\bf Input:} Approximately feasible solution $X$\;
            $D_x\gets\mathrm{Diag}(x)$ with $x_i=\min\left\{\tfrac{f_i}{(X\one_n)_i}, 1\right\}$\;
            \vspace{0.075cm}
            $X_\mathrm{tmp}\gets D_xX$\;
            \vspace{0.075cm}
            $D_y\gets\mathrm{Diag}(y)$ with $y_j=\min\left\{\tfrac{g_j}{({X_\mathrm{tmp}}^\top\one_m)_j}, 1\right\}$\;
            \vspace{0.075cm}
            $X_\mathrm{tmp}\gets X_\mathrm{tmp} D_y$\;
            \vspace{0.075cm}
            $\mathrm{err}_r\gets f-X_\mathrm{tmp}\one_n$, $\mathrm{err}_c\gets g-{X_\mathrm{tmp}}^\top\one_m$\;
            \vspace{0.075cm}
            $X_{\mathrm{feas}}\gets X_\mathrm{tmp}+\mathrm{err}_r\mathrm{err}_c^\top/\|\mathrm{err}_r\|_1$
            
        {\bf Output:} $X_\mathrm{feas}$.
        \caption{Rounding~\cite[Algorithm 2]{altschuler2017near}}
        \label{alg:round}
\end{algorithm}

It turns out Algorithm \ref{alg:round} exhibits strong theoretical guarantee. The following lemma shows that the distance between the output of Algorithm \ref{alg:round} and input solution can be upper bound by input's violation of marginal constraints.
\begin{lem}[{\cite[Lemma 7]{altschuler2017near}}]\label{lem:round}
    It takes $O(mn)$ time for Algorithm \ref{alg:round} to output a feasible transportation plan $X_{\mathrm{feas}}$ such that
    \begin{equation*}
        \|X_{\mathrm{feas}}-X\|_1\leq 2(\| f-X\one_n \|_1+\|g-{X}^\top\one_m\|_1) \ .
    \end{equation*}
\end{lem}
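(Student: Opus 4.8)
The plan is to establish the three assertions separately: the $O(mn)$ running time, the feasibility of $X_{\mathrm{feas}}$, and the $\ell_1$ proximity bound. The running time is immediate, since each line of Algorithm~\ref{alg:round} is a diagonal rescaling, a marginal sum, or a single rank-one outer product, each costing $O(mn)$ flops. For the other two claims I would track the iterate through its two intermediate stages: write $X_1 := D_x X$ for the matrix after the row rescaling and $X_2 := D_x X D_y$ after the column rescaling, so that $X_{\mathrm{feas}} = X_2 + \mathrm{err}_r\mathrm{err}_c^\top/\|\mathrm{err}_r\|_1$ with $\mathrm{err}_r = f - X_2\one_n$ and $\mathrm{err}_c = g - X_2^\top\one_m$. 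The choice $x_i = \min\{f_i/(X\one_n)_i,1\}$ gives $(X_1\one_n)_i = \min\{f_i,(X\one_n)_i\}\le f_i$, and likewise $(X_2^\top\one_m)_j = \min\{g_j,(X_1^\top\one_m)_j\}\le g_j$, so that $\mathrm{err}_r\ge 0$ and $\mathrm{err}_c\ge 0$ entrywise.

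The key observation for feasibility is that the two leftover masses coincide. Since $f$ and $g$ are probability vectors, $\|\mathrm{err}_r\|_1 = \one_m^\top\mathrm{err}_r = 1 - \one_m^\top X_2\one_n = 1 - \one_n^\top X_2^\top\one_m = \one_n^\top\mathrm{err}_c = \|\mathrm{err}_c\|_1 =: s$, both being nonnegative. The rank-one correction then restores both marginals exactly: $X_{\mathrm{feas}}\one_n = X_2\one_n + \mathrm{err}_r(\one_n^\top\mathrm{err}_c)/s = X_2\one_n + \mathrm{err}_r = f$, and symmetrically $X_{\mathrm{feas}}^\top\one_m = g$; nonnegativity is preserved throughout, so $X_{\mathrm{feas}}$ is a valid transport plan.

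For the proximity bound I would telescope along the two stages and the correction, $\|X_{\mathrm{feas}} - X\|_1 \le \|X_1 - X\|_1 + \|(X_2 - X_1) + \mathrm{err}_r\mathrm{err}_c^\top/s\|_1$, and control each piece by a marginal violation. Because $0\le x_i\le 1$, the first rescaling only shrinks entries, giving $\|X_1 - X\|_1 = \sum_i(1-x_i)(X\one_n)_i = \|[X\one_n - f]^+\|_1$; the same monotonicity shows the induced column-sum change equals the matrix change, $\|X_1^\top\one_m - X^\top\one_m\|_1 = \|X_1 - X\|_1$. The main trick is the correction term: using $\|\mathrm{err}_r\mathrm{err}_c^\top/s\|_1 = s = \|[g - X_1^\top\one_m]^+\|_1$ together with $\|X_2 - X_1\|_1 = \|[X_1^\top\one_m - g]^+\|_1$ and the identity $\|[v]^+\|_1 + \|[-v]^+\|_1 = \|v\|_1$, the second-rescaling change and the leftover mass merge into the single column-marginal gap $\|X_1^\top\one_m - g\|_1$. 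Chaining these with $\|X_1^\top\one_m - g\|_1 \le \|X_1 - X\|_1 + \|X^\top\one_m - g\|_1$ yields $\|X_{\mathrm{feas}} - X\|_1 \le 2\|[X\one_n - f]^+\|_1 + \|X^\top\one_m - g\|_1$, which is dominated by the claimed right-hand side.

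I expect the bookkeeping in this last step to be the main obstacle: specifically, recognizing that the leftover mass $s$ recombines with the second-scaling error through the positive/negative-part identity, and that the equality $\|\mathrm{err}_r\|_1 = \|\mathrm{err}_c\|_1$ needed for feasibility hinges on $f$ and $g$ being probability vectors. The remaining estimates are elementary consequences of the monotonicity $0 \le x_i, y_j \le 1$ of the two diagonal rescalings.
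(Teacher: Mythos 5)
Your proof is correct; the paper itself gives no proof of this lemma (it is imported verbatim as Lemma 7 of Altschuler--Weed--Rigollet), and your argument is a faithful reconstruction of the standard proof there: the monotone row/column rescalings, the identity $\|\mathrm{err}_r\|_1=\|\mathrm{err}_c\|_1$ from $f,g$ being probability vectors, and the recombination of the second-stage shrinkage with the leftover mass via $\|[v]^+\|_1+\|[-v]^+\|_1=\|v\|_1$ all check out, and in fact yield the slightly sharper bound $2\|[X\one_n-f]^+\|_1+\|X^\top\one_m-g\|_1$. The only cosmetic gap is the degenerate case $\|\mathrm{err}_r\|_1=0$, where $X_2$ is already feasible and the rank-one correction should be taken to be zero.
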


\section{Theoretical guarantees}\label{sec:theory}

In this section, we establish theoretical guarantee for restarted PDHG (Algorithm \ref{alg:rpd}). In particular, we develop a problem-independent local linear convergence rate in Section \ref{sec:local-rate}. Furthermore, global problem-independent linear complexity is derived when the data are rational numbers in Section \ref{sec:global-rate}. {We highlight here that the results presented herein is not limited to OT, and can be directly applied to the more general totally uni-modular LP.}

\subsection{Local convergence}\label{sec:local-rate}

\begin{mydef}\label{def:partition}
For optimal transport problem \eqref{eq:ot} and an optimal primal-dual solution $(X^*,p^*,q^*)$, we define the index partition $(N,B_1,B_2)$ of primal variable elements as
    \begin{align*}
     \ N&=\{(i,j)\in [m]\times[n]: C_{ij}-p_i^*-q_j^*>0\} \\ \ B_1&=\{(i,j)\in [m]\times[n]: C_{ij}-p_i^*-q_j^*=0,\ X_{ij}^*>0\} \\ \  B_2&=\{(i,j)\in [m]\times[n]: C_{ij}-p_i^*-q_j^*=0, \ X_{ij}^*=0\} \ .
\end{align*}
Furthermore, we define the non-degeneracy metric $\delta$ as:
\begin{equation*}
    \delta := %\frac{1}{\max\{ 1,\|A\|_2\}}
    \min\left\{\min_{(i,j)\in N} \frac{C_{ij}-p_i^*-q_j^*}{\sqrt{m+n}},\;\min_{(i,j)\in B_1} X_{ij}^*\right\} \ .
\end{equation*}
\end{mydef}
The following theorem builds the local linear convergence of Algorithm \ref{alg:rpd}. 
\begin{thm}\label{thm:local-rate}
    Consider $\{(X^{t,k},p^{t,k},q^{t,k})\}$ the iterates of Algorithm \ref{alg:rpd} with restart scheme \eqref{eq:restart-condition} for solving \eqref{eq:ot}. Suppose step-size $\eta\leq \frac{1}{2\sqrt{m+n}}$. Denote $z^*=(X^*,p^*,q^*)$ the converging optimal solution, namely $(X^{t,0},p^{t,0},q^{t,0})\rightarrow (X^*,p^*,q^*)$ and denote $(N, B_1, B_2)$ the partition using Definition \ref{def:partition}. Then it holds that
    \begin{enumerate}
        \item[(i)] There exists an integer $T>0$ such that for any $t\geq T$,
        \begin{equation}\label{eq:iden-N}
            X_{ij}^{t,0}=0,\ C_{ij}-p_i^{t,0}-{q_j^{t,0}}>0\ , \quad \forall\ (i,j)\in N\ ,
        \end{equation}
        and
        \begin{equation}\label{eq:iden-B}
            X_{ij}^{t,0}>0\ , \quad \forall\ (i,j)\in B_1\ .
        \end{equation}
        \item[(ii)] It holds for any $t\geq T$ that
        \begin{equation*}
        \tau^t\leq \frac{16}{\beta}(m+n)^{1.5} \ ,
        \end{equation*}
        and
        \begin{equation*}
        \mathrm{dist}((X^{t,0},p^{t,0},q^{t,0}),\mathcal Z^*)\leq \beta^{t-T}(m+n)\mathrm{KKT}(X^{T,0},p^{T,0},q^{T,0}) \ .
    \end{equation*}
    \end{enumerate}
\end{thm}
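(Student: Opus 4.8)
The plan is to treat Lemma \ref{lem:basic} as the main engine and to specialize its two problem-dependent quantities---the operator norm $\|A\|_2$ and the sharpness constant $\alpha$---to the optimal transport data in \eqref{eq:ot-lp-data}, using a finite active-set identification argument to replace the global constants by sharper local ones. A preliminary fact I would record is $\|A\|_2=\sqrt{m+n}$: since $AA^\top=\begin{bmatrix} n\I_m & \one_m\one_n^\top \\ \one_n\one_m^\top & m\I_n\end{bmatrix}$, testing against vectors of the form $(\alpha\one_m,\beta\one_n)$ exhibits $m+n$ as its top eigenvalue. This already explains the hypothesis $\eta\le \frac{1}{2\sqrt{m+n}}=\frac{1}{2\|A\|_2}$ and matches the step-size requirement of Lemma \ref{lem:basic}. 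Throughout I write $z^{t,0}=(X^{t,0},p^{t,0},q^{t,0})$ for the restart points.

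For part (i) I would exploit the convergence $z^{t,0}\to z^*$ from Lemma \ref{lem:basic}(i) together with the non-degeneracy margin $\delta$ of Definition \ref{def:partition}. For $(i,j)\in N$ one has $C_{ij}-p_i^*-q_j^*\ge \delta\sqrt{m+n}>0$, so by continuity there is a $T$ after which, throughout every subsequent inner loop, the running reduced cost $C_{ij}-p_i^{t,k}-q_j^{t,k}$ stays bounded away from zero. The primal step is the coordinatewise map $X_{ij}^{t,k+1}=\max\{0,\,X_{ij}^{t,k}-\eta(C_{ij}-p_i^{t,k}-q_j^{t,k})\}$, so a coordinate with persistently positive reduced cost, once driven to $0$, stays at $0$. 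The one delicate point is that $X^{t,0}$ is an \emph{average}; here the averaging in Algorithm \ref{alg:rpd} effectively starts from $X^{t,1}$ (the coefficient on $\bar X^{t,0}$ is $0$), so if $X^{t,0}_{ij}$ is small enough that the first step sends it to $0$, then $X^{t,k}_{ij}=0$ for all $k\ge 1$, the average is exactly $0$, and the next restart point inherits $X^{t+1,0}_{ij}=0$; an induction yields \eqref{eq:iden-N}. The claim \eqref{eq:iden-B} is easier: for $(i,j)\in B_1$ we have $X_{ij}^*\ge\delta>0$, so $X^{t,0}_{ij}\to X^*_{ij}$ forces $X^{t,0}_{ij}>0$ for large $t$.

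For part (ii) I would argue that, once \eqref{eq:iden-N}--\eqref{eq:iden-B} hold, the iterates evolve exactly as restarted PDHG applied to the \emph{reduced} LP obtained by deleting the columns of $A$ indexed by $N$; moreover the restart test \eqref{eq:restart-condition} coincides with that of the reduced problem after identification, so the two runs are genuinely identical. The plan is then to reapply Lemma \ref{lem:basic} to the reduced problem with the clock reset at $T$. The operator norm still obeys $\|\subA\|_2\le \|A\|_2=\sqrt{m+n}$ (restricting to a subset of columns cannot increase the spectral norm), and the crux is to show the reduced KKT system has sharpness constant $\alpha\ge \frac{1}{m+n}$. Substituting $\|A\|_2=\sqrt{m+n}$ and $\alpha\ge\frac{1}{m+n}$ into Lemma \ref{lem:basic}(ii)--(iii) gives precisely $\tau^t\le\frac{16}{\beta}(m+n)^{1.5}$ and $\mathrm{dist}(z^{t,0},\mathcal Z^*)\le \beta^{t-T}(m+n)\,\mathrm{KKT}(z^{T,0})$.

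The hard part will be the sharpness bound $\alpha\ge\frac{1}{m+n}$, and this is exactly where total unimodularity enters (and why the result extends beyond OT). I would bound the Hoffman/sharpness constant of the identified KKT system by the reciprocal of the smallest nonzero singular value of the active constraint submatrix; since $A$ is the totally unimodular incidence matrix of a bipartite graph, all its square subdeterminants lie in $\{-1,0,1\}$, which lets me bound this singular value from below by a purely dimensional quantity and convert it into $\alpha\ge\frac{1}{m+n}$, \emph{independent} of the data. The two subtleties I anticipate are (a) making the ``iterates behave as PDHG on the reduced problem'' reduction fully rigorous, which requires controlling the inactive ($N$) coordinates and the reduced costs uniformly across a whole inner loop rather than only at restart points, using the bounded inner-loop length and the non-expansiveness of the PDHG step; and (b) ensuring the Hoffman estimate simultaneously handles all three KKT blocks---primal feasibility, dual feasibility, and the duality gap---rather than each in isolation.
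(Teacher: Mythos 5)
Your proposal is correct and follows essentially the same route as the paper: finite identification of $N$ and $B_1$ via the non-degeneracy margin $\delta$ and the absorbing property of the projected primal step (the paper makes your "small enough that one step kills it" quantitative by choosing the neighborhood radius $\tfrac{\eta}{1+\eta}\delta$), followed by reapplying Lemma \ref{lem:basic} to the reduced/homogenized system with $\|\subA\|_2\le\sqrt{m+n}$ and a sharpness bound $\alpha_L\ge\frac{1}{m+n}$ obtained exactly as you anticipate, from total unimodularity and the Hinder-type characterization of sharpness via inverses of nonsingular submatrices. The two subtleties you flag (uniform control over inner iterates, and handling all KKT blocks) are real and are treated in the paper the way you suggest.
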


Theorem \ref{thm:local-rate} implies the finite-time identification and the local convergence of Algorithm \ref{alg:rpd}. Part (i) shows that identification of set $N$ and $B_1$, i.e., \eqref{eq:iden-N} and \eqref{eq:iden-B} hold, can be achieved in finite time, while part (ii) establishes the local linear convergence after identification.

\begin{rem}
    Theorem \ref{thm:local-rate} implies $\widetilde O\pran{(m+n)^{1.5}\log\frac{1}{\epsilon}}$ local iteration complexity to find an $\epsilon$-accuracy solution. Since each iteration of Algorithm \ref{alg:rpd} requires $O(mn)$ operations, the local flop complexity is $\widetilde O\pran{mn(m+n)^{1.5}\log\frac{1}{\epsilon}}$. In comparison, the flop count of network simplex equals $\widetilde O\pran{mn(m+n)\log\frac{1}{\epsilon}}$, which is better than Algorithm \ref{alg:rpd} by the order of $\widetilde O(\sqrt{m+n})$. However, it is straightforward to implement Algorithm \ref{alg:rpd} in parallel. In other words, Algorithm \ref{alg:rpd} can achieve $\widetilde O\pran{(m+n)^{1.5}\log\frac{1}{\epsilon}}$ parallel depth.
    Furthermore, in terms of Lemma \ref{lem:round}, the cost of rounding (Algorithm \ref{alg:round}) is merely logarithmic. Thus
    Algorithm \ref{alg:rpd} with rounding (Algorithm \ref{alg:round}) to find a feasible $\epsilon$-accuracy solution has iteration complexity $\widetilde O\pran{(m+n)^{1.5}\log\frac{1}{\epsilon}}$, with flop complexity $\widetilde O\pran{mn(m+n)^{1.5}\log\frac{1}{\epsilon}}$ and parallel depth $\widetilde O\pran{(m+n)^{1.5}\log\frac{1}{\epsilon}}$.
\end{rem}

The rest of this section develops the proof of Theorem \ref{thm:local-rate}. 
We will show that the highly structured constraint matrix $A$ in \eqref{eq:ot-lp-data} enables faster convergence of Algorithm \ref{alg:rpd}. In the following proposition, we first present two basic spectral properties of constraint matrix $A$.
\begin{prop}\label{prop:norm-sharp}
Denote $\subA$ a non-singular sub-matrix of matrix $A$. Then it holds that
    \begin{enumerate}
        \item[(i)] $\|\subA\|_2\leq \sqrt{m+n}$,
        \item[(ii)] $\|\subA^{-1}\|_2\leq m+n$.
    \end{enumerate}
\end{prop}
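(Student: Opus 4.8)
The plan is to exploit the combinatorial structure of the constraint matrix $A$ in \eqref{eq:ot-lp-data}, which is the incidence matrix of a complete bipartite graph $K_{m,n}$ and is therefore totally unimodular (TU). The two bounds will follow from two genuinely different facts: the spectral norm bound (i) from the column/row structure of $A$, and the inverse bound (ii) from total unimodularity together with Cramer's rule.

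For part (i), first I would observe that each column of $A$ has exactly two nonzero entries (both equal to $1$): one from the row-marginal block $\one_n^\top \otimes \I_m$ and one from the column-marginal block $\I_n \otimes \one_m^\top$. Hence $\|A\|_2 \leq \|A\|_F$ is too lossy, so instead I would bound $\|A\|_2^2 \leq \|A\|_1 \|A\|_\infty$, where $\|A\|_1$ is the maximum absolute column sum and $\|A\|_\infty$ is the maximum absolute row sum. Here $\|A\|_1 = 2$ (two ones per column) and $\|A\|_\infty = \max\{m, n\}$ (a row in the first block has $n$ ones, a row in the second block has $m$ ones), giving $\|A\|_2 \leq \sqrt{2\max\{m,n\}} \leq \sqrt{m+n}$. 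Since $\subA$ is a submatrix of $A$, the spectral norm only decreases under deletion of rows/columns, so $\|\subA\|_2 \leq \|A\|_2 \leq \sqrt{m+n}$, yielding (i).

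For part (ii), the key input is total unimodularity of $A$: every square submatrix has determinant in $\{-1, 0, +1\}$ (this is a classical fact for node-arc incidence matrices of bipartite graphs). Since $\subA$ is nonsingular, $|\det \subA| = 1$. Writing $\subA^{-1} = \frac{1}{\det \subA}\,\mathrm{adj}(\subA)$ via Cramer's rule, each entry of $\subA^{-1}$ equals $\pm$ a cofactor of $\subA$, which is itself the determinant of a submatrix of $A$, hence lies in $\{-1,0,+1\}$. Therefore every entry of $\subA^{-1}$ is bounded in absolute value by $1$. If $\subA$ is $r\times r$ with $r \leq m+n$ (a nonsingular submatrix has at most $m+n$ rows since $A$ has $m+n$ rows), then $\|\subA^{-1}\|_2 \leq \|\subA^{-1}\|_F \leq r \leq m+n$, which is (ii). Slightly more carefully one can use $\|\subA^{-1}\|_2 \leq \sqrt{\|\subA^{-1}\|_1 \|\subA^{-1}\|_\infty} \leq r$, either route suffices.

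I expect the main obstacle to be cleanly justifying the total unimodularity of $A$ and the resulting entrywise bound on $\subA^{-1}$, rather than any hard estimation. The spectral-norm argument in (i) is routine. The subtlety in (ii) is that total unimodularity controls the \emph{entries} of the inverse but not directly its \emph{operator norm}; bridging this gap requires the dimension bound $r \leq m+n$ and a norm inequality converting the entrywise $\ell_\infty$ bound into a spectral bound. One must also verify the standard claim that $A$ here is genuinely TU (the two blocks correspond to the two sides of the bipartition of $K_{m,n}$, so each column has its two ones split across the two blocks, which is exactly the bipartite incidence pattern guaranteeing TU). Once TU is in hand, the Cramer's-rule step is immediate, so the real content is the structural observation about $A$.
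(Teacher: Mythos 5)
Your part (ii) is correct and is essentially the paper's own argument (total unimodularity of $A$, hence of every submatrix, gives $\subA^{-1}_{ij}\in\{-1,0,1\}$ by Cramer's rule, and then $\|\subA^{-1}\|_2\leq\|\subA^{-1}\|_F\leq r\leq m+n$). But part (i) contains a genuine error: the final inequality $\sqrt{2\max\{m,n\}}\leq\sqrt{m+n}$ is false whenever $m\neq n$. Indeed $2\max\{m,n\}=\max\{m,n\}+\max\{m,n\}\geq\max\{m,n\}+\min\{m,n\}=m+n$, with equality only when $m=n$; for, say, $m=1$, $n=10$ you would be claiming $\sqrt{20}\leq\sqrt{11}$. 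So the H\"older-type bound $\|A\|_2^2\leq\|A\|_1\|A\|_\infty=2\max\{m,n\}$ is a valid but strictly weaker estimate than the one the proposition asserts, and your chain of inequalities does not deliver $\|\subA\|_2\leq\sqrt{m+n}$.

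The fix is to exploit the two-block structure of $A$ more directly, as the paper does: for $x=\vvec(V)$ one has
\begin{equation*}
\|Ax\|_2^2=\|V\one_n\|_2^2+\|V^\top\one_m\|_2^2\leq\|V\|_2^2\big(\|\one_n\|_2^2+\|\one_m\|_2^2\big)\leq(m+n)\|V\|_F^2=(m+n)\|x\|_2^2\ ,
\end{equation*}
so $\|A\|_2\leq\sqrt{m+n}$ (in fact equality holds, attained at $x=\tfrac{1}{\sqrt{mn}}\one_{mn}$), and then $\|\subA\|_2\leq\|A\|_2$ since deleting rows and columns cannot increase the spectral norm. Equivalently, one can note $A^\top A=(\one_n\one_n^\top)\otimes\I_m+\I_n\otimes(\one_m\one_m^\top)$, whose two commuting summands have largest eigenvalues $n$ and $m$. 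Either route closes the gap; your structural observations about the columns and rows of $A$ are correct but the norm inequality you chose is simply not sharp enough to reach $\sqrt{m+n}$.
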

\begin{proof}
    (i) We derive $\|A\|_2$ by definition of operator norm.
    \begin{align*}
        \begin{split}
            \|A\|_2^2 & = \max_x\frac{\|Ax\|_2^2}{\|x\|_2^2} = \max_x\frac{\|(\one_n^\top \otimes \I_m)x\|_2^2+\|(\I_n \otimes \one_m^\top)x\|_2^2}{\|x\|_2^2}\\
            & = \max_V\frac{\|(\one_n^\top \otimes \I_m)\vvec(V)\|_2^2+\|(\I_n \otimes \one_m^\top)\vvec(V)\|_2^2}{\|\vvec(V)\|_2^2}= \max_V\frac{\|\vvec( \I_mV\one_n)\|_2^2+\|\vvec(\one_m^\top V\I_n)\|_2^2}{\|\vvec(V)\|_2^2}\\
            & = \max_V\frac{\|V\one_n\|_2^2+\|V^\top\one_m\|_2^2}{\|V\|_F^2}\leq \max_V\frac{\|V\|_2^2\|\one_n\|_2^2+\|V^\top\|_2^2\|\one_m\|_2^2}{\|V\|_F^2} \leq n+m \ ,
        \end{split}
    \end{align*}
    where the fourth equality uses the property of Kronecker product $(P\otimes Q)\vvec(V)=\vvec(QVP^\top)$ for any matrices $P,Q,V$ that make the calculation consistent, and moreover, the equality is attained by $x=\tfrac{1}{\sqrt{mn}}\one_{mn}$. Thus $\|A\|_2=\sqrt{m+n}$ and for any sub-matrix $\subA$ of $A$,
    \begin{equation*}
        \|\subA\|_2\leq \|A\|_2=\sqrt{m+n}\ .
    \end{equation*}

    (ii) Note that $A=\begin{bmatrix}
        \one_n^\top \otimes \I_m \\ \I_n \otimes \one_m^\top
    \end{bmatrix}\in \mathbb R^{(m+n)\times mn}$ is totally uni-modular, and thus by definition of totally uni-modular matrix, any sub-matrix $\subA$ of $A\in \mathbb R^{(m+n)\times mn}$ is also totally uni-modular~\cite{schrijver1998theory}. Therefore, we know that $\subA_{ij}^{-1}\in\{-1,0,1\}$~\cite{schrijver1998theory} which further implies $\|\subA^{-1}\|_F\leq m+n$. We conclude the proof by noticing that $\|\subA^{-1}\|_2\leq \|\subA^{-1}\|_F\leq m+n$.
\end{proof}

\begin{lem}\label{lem:sharp-local}
    Let $\subA$ be a sub-matrix of $A$. Consider the homogeneous linear inequality system
    \begin{equation}\label{eq:generic-local-system}
        \subA u=0,\ u_S\geq 0,\ \subA^\top v \leq 0 \ ,
    \end{equation}
    where $u_S$ is a sub-vector of $u$. Denote $\alpha_L$ the sharpness of \eqref{eq:generic-local-system} and then it holds that
    \begin{equation*}
        \alpha_L\geq \frac{1}{m+n} \ .
    \end{equation*}
\end{lem}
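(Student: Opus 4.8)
The plan is to read $\alpha_L$ as the reciprocal of a Hoffman constant of the homogeneous cone $\mathcal F=\{(u,v):\subA u=0,\ u_S\ge 0,\ \subA^\top v\le 0\}$ and to bound that Hoffman constant purely through the spectral estimates of Proposition \ref{prop:norm-sharp}. The first observation is that $\mathcal F$ is a \emph{product} cone: the variable $u$ and the variable $v$ are decoupled. Measuring the constraint residual in the Euclidean norm, the squared distance to $\mathcal F$ splits as $\mathrm{dist}(u,\mathcal F_u)^2+\mathrm{dist}(v,\mathcal F_v)^2$ with $\mathcal F_u=\{u:\subA u=0,\ u_S\ge 0\}$ and $\mathcal F_v=\{v:\subA^\top v\le 0\}$, so that $\alpha_L\ge\min\{\alpha_u,\alpha_v\}$. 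It therefore suffices to show each of $\alpha_u,\alpha_v$ is at least $\tfrac{1}{m+n}$.

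For both sub-systems I would apply Hoffman's bound in the standard submatrix form: the Hoffman constant of a polyhedral cone $\{w:Mw\le 0\}$ (writing each equality as a pair of inequalities) is at most $\max_J 1/\sigma_{\min}(M_J)$, the maximum ranging over index sets $J$ for which the active rows $M_J$ are linearly independent. The entire task then reduces to the uniform lower bound $\sigma_{\min}(M_J)\ge \tfrac{1}{m+n}$ over all such full-row-rank active submatrices, which is where Proposition \ref{prop:norm-sharp}(ii) enters.

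The dual block $\mathcal F_v$ is the clean case. Its active submatrices are column selections $\subA_{:,J}$ of $\subA$ with linearly independent columns; such a matrix is a full-column-rank submatrix of $A$ with at most $m+n$ rows, hence $|J|\le m+n$, and deleting all but $|J|$ suitable rows produces a square non-singular submatrix $B$ of $A$. Since appending rows can only enlarge the smallest singular value of a tall matrix, $\sigma_{\min}(\subA_{:,J})\ge\sigma_{\min}(B)=1/\|B^{-1}\|_2\ge\tfrac{1}{m+n}$ by Proposition \ref{prop:norm-sharp}(ii), giving $\alpha_v\ge\tfrac{1}{m+n}$.

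The primal block $\mathcal F_u$ is where I expect the real difficulty. Here an active submatrix stacks a subset $\subA_{J_1,:}$ of the equality rows on top of active sign rows $-\I_{J_2}$ with $J_2\subseteq S$, and its smallest singular value is no longer directly a submatrix of $A$. The plan is to disentangle the unit rows: expanding $M_JM_J^\top$ and forming its Schur complement with respect to the identity block shows that $\sigma_{\min}(M_J)$ is controlled by $\sigma_{\min}$ of the column-restricted matrix $(\subA_{J_1,:})_{:,\overline{J_2}}$, which is again an honest submatrix of $A$ of size at most $(m+n)\times(m+n)$ and of full row rank; an analogous reduction (now appending columns, which is likewise monotone for the smallest singular value of a wide matrix) together with Proposition \ref{prop:norm-sharp}(ii) then bounds it below by $\tfrac{1}{m+n}$. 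The delicate point, and the main obstacle, is to carry this elimination through while keeping the controlling matrix a submatrix of $A$ itself rather than of the augmented matrix obtained by adjoining the identity (which would inflate the relevant dimension from $m+n$ to $mn$ and destroy the bound), and to track the constants lost in the Schur-complement step. Total unimodularity of $A$, already exploited in Proposition \ref{prop:norm-sharp}, is what makes this reduction possible. Combining $\alpha_u,\alpha_v\ge\tfrac{1}{m+n}$ then yields $\alpha_L\ge\tfrac{1}{m+n}$.
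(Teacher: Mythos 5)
There is a genuine gap, and you have located it yourself: the primal block. The paper's proof does not perform any decomposition or Schur-complement elimination at all; it invokes a single packaged result, \cite[Corollary~1]{hinder2023worst}, which is stated precisely for homogeneous systems of the form $\subA u=0,\ u_S\ge 0,\ \subA^\top v\le 0$ and bounds the sharpness by $\alpha_L\ge 1/\max_{G\in\mathcal G}\|G^{-1}\|_2$, where $\mathcal G$ ranges only over non-singular submatrices of $\begin{bmatrix}\subA^\top & 0\\ 0 & \subA\end{bmatrix}$ --- crucially, the identity rows generated by the sign constraints $u_S\ge 0$ never enter. Any such $G$ is block-diagonal with blocks that are non-singular submatrices of $A$, so Proposition~\ref{prop:norm-sharp}(ii) gives $\|G^{-1}\|_2\le m+n$ and the lemma follows in two lines. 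The entire content you flag as ``the delicate point, and the main obstacle'' is exactly what that corollary absorbs; without it, your argument for $\mathcal F_u=\{u:\subA u=0,\ u_S\ge 0\}$ is a plan rather than a proof.

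Beyond being incomplete, the route you sketch for $\mathcal F_u$ is unlikely to recover the stated constant even if pushed through. Writing an active submatrix as $M_J=\begin{bmatrix}\subA_{J_1,:}\\ -\I_{J_2,:}\end{bmatrix}$ and eliminating the unit rows (either by Schur complement of $M_JM_J^\top$ or by block-triangularizing after a column permutation) leaves you with bounds of the shape $\|(\subA_{J_1,K_1})^{-1}\|_2\bigl(1+\|\subA_{J_1,J_2}\|_2\bigr)$, which by Proposition~\ref{prop:norm-sharp} is of order $(m+n)^{1.5}$ rather than $m+n$; the constants lost in that step are not benign, and the clean $1/(m+n)$ bound would degrade. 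Your treatment of the dual block $\mathcal F_v$ and the observation that the $u$- and $v$-systems decouple are both correct and consistent with what the cited corollary delivers, but the lemma as stated is not established by your argument. The honest fix is to cite (or reprove) the Hoffman-type characterization for mixed equality/sign-constraint systems that avoids adjoining identity rows, which is what the paper does.
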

\begin{proof}
    We use the characterization of sharpness constant derived in \cite[Corollary 1]{hinder2023worst}: $\alpha_L\geq\frac{1}{\max_{G\in\mathcal G}\|G^{-1}\|_2}$ where $\mathcal G$ is the set of all non-singular sub-matrices of the matrix $\begin{bmatrix}
        \subA^\top & 0 \\ 0 & \subA
    \end{bmatrix}$. From part (ii) of Proposition \ref{prop:norm-sharp}, we know $\|G^{-1}\|_2\leq m+n$ for any $G\in \mathcal G$ and thus $\alpha_L\geq \frac{1}{m+n}$.
\end{proof}

Furthermore, it turns out that vectorizing the PDHG iterates in Algorithm \ref{alg:rpd} is equivalent to PDHG iterates on standard LP. More precisely, let $\{(X^k,p^k,q^k)\}$ be PDHG iterates for solving \eqref{eq:ot} with update
    \begin{align*}
        &X^{k+1}\gets \mathrm{Proj}_{\mathbb R_+^{m\times n}}(X^{k}-\eta(C-p^{k}\one_n^\top-\one_m {q^{k}}^\top))\\
        & p^{k+1}\gets p^{k}+\sigma(f-(2X^{k+1}-X^{k})\one_n)\\
        & q^{k+1}\gets q^{k}+\sigma(g-(2X^{k+1}-X^{k})^\top\one_m) \ .
    \end{align*}
On the other hand, consider PDHG on solving standard LP \eqref{eq:standard-lp} with \eqref{eq:ot-lp-data}. The algorithm has update rule
\begin{align*}
        & x^{k+1}\gets \mathrm{Proj}_{\mathbb R_+^{mn}}(x^k-\eta(c-A^\top y^k)) \\ 
        & y^{k+1}\gets y^k+\sigma(b-A(2x^{k+1}-x^k)) \ ,
    \end{align*}
It is easily observed that for any $k\geq 0$, we have
\begin{equation*}
     x^{k}=\vvec(X^{k}),\quad y^k=\begin{bmatrix}
            p^k \\ q^k
        \end{bmatrix} \ .
\end{equation*}

Given this equivalence in iterates, now we prove the main result Theorem \ref{thm:local-rate}.
\begin{proof}[Proof of Theorem \ref{thm:local-rate}]
(i) We prove the existence of $T$, namely, Algorithm \ref{alg:rpd} has finite-time identification. Denote $z^{t,k}=(\vvec(X^{t,k}),(p^{t,k},q^{t,k}))$ and $z^{*}=(\vvec(X^{*}),(p^{*},q^{*}))$. By part (i) of Lemma \ref{lem:basic}, there exists an integer $T_0>0$ such that $\|z^{t,0}-z^*\|_2\leq \gamma\delta$ for any $t\geq T_0$. Denote $\gamma=\frac{\eta}{1+\eta}$. Therefore, for any $k\geq 1$,
\begin{equation*}
    X^{t,k}_{ij}\geq X^{*}_{ij}-\|X^{t,k}-X^{*}\|_2\geq X^{*}_{ij}-\|z^{t,0}-z^*\|_2>\delta-\gamma\delta>0, \ \forall (i,j)\in B_1 \ ,
\end{equation*}
and
\begin{equation*}
    X^{t,k}_{ij}<\gamma\frac{\delta}{2},\quad C_{ij}-p_i^{t,k}-q_j^{t,k}>\delta-\gamma{\delta}=\pran{1-{\gamma}}\delta>0, \quad \forall (i,j)\in N \ .
\end{equation*}
This implies for any $(i,j)\in N$ and any $k\geq 1$,
\begin{equation*}
    X^{t,k}_{ij}-\eta\pran{C_{ij}-p_i^{t,k}-q_j^{t,k}}<\pran{\gamma-\eta\pran{1-{\gamma}}}\delta\leq 0\ ,
\end{equation*}
and thus $X^{t,k}_{ij}=0$ for any $k\geq 1$. Let $T=T_0+1$ and we conclude the proof.

(ii) Denote $x=\vvec(X)$, $y=(p,q)$ and $\mathcal Z_v^*=\{(\vvec(X^*),p^*,q^*)|(X^*,p^*,q^*)\in \mathcal Z^*\}$. Then the PDHG update after identification can be written as
    \begin{align*}
        & x_{B\cup N_2}^{t,k+1}\gets \mathrm{Proj}_{\mathbb R^{|B|}\times \mathbb R_+^{|N_2|}}(x_{B\cup N_2}^{t,k}-\eta(c_{B\cup N_2}-A_{B\cup N_2}^\top y^{t,k}))=\mathrm{Proj}_{\mathbb R^{|B|}\times \mathbb R_+^{|N_2|}}(x_{B\cup N_2}^{t,k}-\eta A_{B\cup N_2}^\top (y^*-y^{t,k})) \\ 
        & y^{t,k+1}\gets y^{t,k}+\sigma(b-A_{B\cup N_2}(2x_{B\cup N_2}^{t,k+1}-x_{B\cup N_2}^{t,k}))= y^{t,k}-\sigma A_{B\cup N_2}(2(x_{B\cup N_2}^{t,k+1}-x^*)-(x_{B\cup N_2}^{t,k}-x^*))\ ,
    \end{align*}
    Denote $u^{t,k}=(u_B^{t,k},u_{N_2}^{t,k})=x_{B\cup N_2}^{t,k}-x_{B\cup N_2}^*$ and $v^{t,k}=y^{t,k}-y^*$. Note that $x_{N_2}^*=0$ and we have
    \begin{align*}
        & u^{t,k+1}\gets \mathrm{Proj}_{\mathbb R^{|B|}\times \mathbb R_+^{|N_2|}}(u^{t,k}+\eta A_{B\cup N_2}^\top v^{t,k}) \\ 
        & v^{t,k+1}\gets v^{t,k}-\sigma A_{B\cup N_2}(2u^{t,k+1}-u^{t,k})\ .
    \end{align*}
    Thus it turns out that $(u^{t,k},v^{t,k})$ actually solves the homogeneous inequality system
    \begin{equation}\label{eq:local-system}
        A_{B\cup N_2}u=0,\ u_{N_2}\geq 0,\ A_{B\cup N_2}^\top v \leq 0 \ .
    \end{equation}
    Denote $\mathcal K$ the solution set to \eqref{eq:local-system} and $\alpha_L$ the sharpness constant of \eqref{eq:local-system}. Hence combining part (ii) of Lemma \ref{lem:basic} and Lemma \ref{lem:sharp-local}, we have
     \begin{equation*}
        \tau^t\leq \frac{16\|A_{B\cup N_2}\|_2}{\alpha_L \beta} \leq \frac{16}{\beta}(m+n)^{1.5} \ ,
    \end{equation*}
    and
    \begin{equation}\label{eq:uv-linear}
        \mathrm{dist}((u^{t,0},v^{t,0}),\mathcal K)\leq \beta^{t-T}\frac{1}{\alpha_L}\mathrm{KKT}(u^{T,0},v^{T,0})\leq \beta^{t-T}(m+n)\mathrm{KKT}(u^{T,0},v^{T,0}) \ .
    \end{equation}
    Therefore we achieve
    \begin{equation*}
    \begin{aligned}
        \mathrm{dist}((X^{t,0},p^{t,0},q^{t,0}),\mathcal Z^*)&=\mathrm{dist}((x^{t,0},y^{t,0}),\mathcal Z_v^*)\leq \mathrm{dist}((u^{t,0},v^{t,0}),\mathcal K)\\
        &\leq \beta^{t-T}(m+n)\mathrm{KKT}(u^{T,0},v^{T,0})\leq \beta^{t-T}(m+n)\mathrm{KKT}(x^{T,0},y^{T,0})\\
        &=\beta^{t-T}(m+n)\mathrm{KKT}(X^{T,0},p^{T,0},q^{T,0}) \ ,
    \end{aligned}
    \end{equation*}
    where the first inequality is from $z^*+\mathcal K\subseteq \mathcal Z_v^*$, the second one uses \eqref{eq:uv-linear} and the last inequality is due to $\mathrm{KKT}(u^{T,0},v^{T,0})=\left\|\begin{bmatrix}
            A_{B\cup N_2}u^{T,0}\\ [A_{B\cup N_2}^\top v^{T,0}]^+
        \end{bmatrix}\right\|_2=\left\|\begin{bmatrix}
            A_{B\cup N_2}x_{B\cup N_2}^{T,0}-b\\ [A_{B\cup N_2}^\top y^{T,0}-c_{B\cup N_2}]^+
        \end{bmatrix}\right\|_2\leq \mathrm{KKT}(x^{T,0},y^{T,0})$.
    
\end{proof}

\subsection{Global complexity with Rational Data}\label{sec:global-rate}
In the preceding section, we demonstrate the fast linear convergence of Algorithm \ref{alg:rpd} once the iterates arrive in a local regime. In this section, we present a bound on the number of iterations needed to reach the local regime when the data of the problems are rational number. Together with the result in the previous section, this demonstrates a global convergence rate.

We start this section by introducing the following assumption:
\begin{ass}\label{ass:integer}
We assume the data of the optimal transport problems, i.e., $C_{ij}$, $f_i$ and $g_j$ for $1\le i\le m$ and $1\le j\le n$, are all rational numbers.
\end{ass}

Assumption \ref{ass:integer} rules out the unusual values of the data. This is a mild condition, and it is always satisfied in practice when floating point number representation is utilized. Furthermore, we define the precision of the data as follows:

\begin{mydef}
    We define the precision of the data $\Delta$ as the minimal positive constant such that for any $1\leq i\leq m$ and $1\leq j\leq n$, $C_{ij}=k_{ij}\Delta$, $f_i=s_i\Delta$ and $g_j=t_j\Delta$, where $k_{ij}$, $s_i$ and $t_j$ are integers.
\end{mydef}

Apparently, the existence of the precision $\Delta$ is guaranteed by Assumption \ref{ass:integer}. With the above definitions, our main theorem on global convergence of Algorithm \ref{alg:rpd} goes as follows
\begin{thm}\label{thm:global-rate}
    Consider $\{(X^{t,k},p^{t,k},q^{t,k})\}$ the iterates of Algorithm \ref{alg:rpd} with restart scheme \eqref{eq:restart-condition} for solving \eqref{eq:ot}. Suppose step-size $\eta\leq \frac{1}{2\sqrt{m+n}}$. Denote $z^*=(X^*,p^*,q^*)$ the converging optimal solution, namely $(X^{t,0},p^{t,0},q^{t,0})\rightarrow (X^*,p^*,q^*)$ and denote $(N, B_1, B_2)$ the partition using Definition \ref{def:partition}. Then it holds that
    \begin{equation*}
        T\leq \log_{1/\beta}\pran{\frac{96(1+\eta)H(m+n)^{2.5}\mathrm{KKT}(X^{0,0},p^{0,0},q^{0,0})}{\eta\delta}}+1 \ ,
    \end{equation*}
    and for any $t<T$, we have
        \begin{equation*}
        \tau^t\leq \frac{1536}{\beta}\frac{H}{\Delta}(m+n)^{3} \ ,
        \end{equation*}
        where $H=\|(\vvec(C),f,g)\|_\infty$.
\end{thm}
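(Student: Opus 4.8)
The plan is to reduce both estimates to a single lower bound on the global sharpness constant $\alpha$ of the KKT system on the ball $B_R(0)$ and then invoke Lemma~\ref{lem:basic}. Part~(ii) of that lemma gives $\tau^t\le\frac{16\|A\|_2}{\beta\alpha}$, and since Proposition~\ref{prop:norm-sharp}(i) already shows $\|A\|_2=\sqrt{m+n}$, a sharpness bound of the form $\alpha\ge\frac{\Delta}{96H(m+n)^{2.5}}$ immediately yields the claimed $\tau^t\le\frac{1536}{\beta}\frac{H}{\Delta}(m+n)^3$ for every $t<T$. For the bound on $T$ I would recall from the proof of Theorem~\ref{thm:local-rate}(i) that entry into the local regime is triggered once $\|z^{t,0}-z^*\|_2\le\gamma\delta$ with $\gamma=\frac{\eta}{1+\eta}$; feeding the same $\alpha$ into part~(iii), $\mathrm{dist}(z^{t,0},\mathcal Z^*)\le\beta^t\frac1\alpha\mathrm{KKT}(z^{0,0})$, and solving $\beta^t\frac1\alpha\mathrm{KKT}(z^{0,0})\le\gamma\delta$ for $t$ produces the stated logarithmic bound after adding the single extra inner loop ($T=T_0+1$).

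The ingredient that introduces $\Delta$ is Assumption~\ref{ass:integer}. First I would rescale so that $C/\Delta,f/\Delta,g/\Delta$ are integral. Because the constraint matrix $A$ in~\eqref{eq:ot-lp-data} is totally unimodular, every basic optimal primal solution and every optimal dual vertex solves a square system $\subA w=r$ with $\subA$ unimodular and $r$ integral after the rescaling, so $X^*$, $(p^*,q^*)$ and hence the reduced costs $C_{ij}-p_i^*-q_j^*$ are themselves integer multiples of $\Delta$. Thus every nonzero such quantity is at least $\Delta$ in magnitude, and Definition~\ref{def:partition} gives $\delta\ge\frac{\Delta}{\sqrt{m+n}}$. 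This quantization is what ties together the $\Delta$ appearing in the sharpness estimate and the $\delta$ appearing in the identification threshold.

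The core step is the lower bound on $\alpha$. I would reuse the submatrix-inverse characterization of sharpness from \cite[Corollary 1]{hinder2023worst} employed in Lemma~\ref{lem:sharp-local}, $\alpha\ge\frac{1}{\max_G\|G^{-1}\|_2}$, but now with $G$ ranging over nonsingular submatrices of the \emph{full} KKT system rather than the homogeneous local system~\eqref{eq:local-system}. The only new feature is the duality-gap row $[c^\top,-b^\top]$, whose entries have magnitude up to $H$ and which destroys total unimodularity. For a submatrix $G$ that contains this row I would expand $\det G$ and the entries of $\mathrm{adj}(G)$ along it: every minor omitting the gap row is a minor of the totally unimodular part and therefore lies in $\{-1,0,1\}$, so $\det G$ is a nonzero integer multiple of $\Delta$ (giving $|\det G|\ge\Delta$), while each cofactor is bounded by $H$ times a polynomial in $m+n$. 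Combining these gives $\|G^{-1}\|_2\le\frac{96H(m+n)^{2.5}}{\Delta}$; submatrices avoiding the gap row are purely totally unimodular and are controlled by Proposition~\ref{prop:norm-sharp}(ii), so they are never binding. This yields $\alpha\ge\frac{\Delta}{96H(m+n)^{2.5}}$.

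Finally I would assemble the pieces exactly as in the first paragraph, using $\delta\ge\frac{\Delta}{\sqrt{m+n}}$ to keep the $T$ bound expressed in terms of $\delta$. The hard part will be the sharpness estimate above: once total unimodularity is broken by the cost/right-hand-side row, one must simultaneously lower-bound the smallest nonzero determinant (the source of $\frac1\Delta$) and upper-bound the largest cofactor (the source of $H$ and the extra powers of $m+n$), all while arguing that the effective size of the binding submatrix stays $O(m+n)$ rather than $O(mn)$ and that the worst nonsingular submatrix is indeed one meeting the gap row, not one already handled cleanly by Proposition~\ref{prop:norm-sharp}. Verifying that the distance-to-set bound of Lemma~\ref{lem:basic}(iii) may legitimately replace the distance to the specific $z^*$ needed for identification (which holds since $z^{t,0}\to z^*$) is a secondary point to check.
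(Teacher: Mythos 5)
Your proposal follows the same architecture as the paper's proof: lower-bound the sharpness of the rescaled, totally unimodular, integral-data LP; feed it into Lemma~\ref{lem:basic}(ii) to bound $\tau^t$ and into Lemma~\ref{lem:basic}(iii) to get global linear decay of $\mathrm{dist}(z^{t,0},\mathcal Z^*)$; then solve for the first $t$ at which this distance drops below the identification threshold $\frac{\eta}{1+\eta}\delta$ extracted from the proof of Theorem~\ref{thm:local-rate}(i). The one place you diverge is how the sharpness bound is obtained. The paper introduces the auxiliary LP \eqref{eq:tu-lp} with data divided by $\Delta$ (so the PDHG iterates are exact rescalings of the original ones and the KKT errors scale by $\Delta$) and then simply cites \cite[Lemma 5]{hinder2023worst}, which directly gives $\alpha\ge\frac{1}{4(m+n)+2+\frac{2H}{\Delta}(2(m+n)+1)^{2.5}}$ for totally unimodular LPs with integral data, followed by elementary simplifications to reach $\frac{1}{\frac{96H}{\Delta}(m+n)^{2.5}}$. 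You instead propose to re-derive this from the submatrix-inverse characterization of \cite[Corollary 1]{hinder2023worst} by expanding determinants and cofactors along the duality-gap row; that is essentially the content of the cited lemma, and your sketch of it is the only step left incomplete. Be careful that you mix the two scalings there: after rescaling, all entries are integers, so the relevant nonzero determinants satisfy $|\det G|\ge 1$ and the factor $\frac{H}{\Delta}$ enters through the magnitude of the entries of $\tilde c=\frac{1}{\Delta}\vvec(C)$ in the cofactor bound, not through $|\det G|\ge\Delta$ (which is the unscaled picture). Your side remark that $\delta\ge\frac{\Delta}{\sqrt{m+n}}$ is plausible but unnecessary, since the bound on $T$ is stated in terms of $\delta$ itself; the paper never uses it.
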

\begin{rem}
    Theorem \ref{thm:global-rate} provides a upper bound on identification complexity $T$ in Theorem \ref{thm:local-rate}. It turns out the dependence of the rate on near-degeneracy metric is merely logarithmic, achieving an $\widetilde O(n^{3.5})$ iteration complexity.
\end{rem}
\begin{rem}
    We here also comment that the length of the inner loop $\tau^t$ for $t\le T$ is inverse in the precision of the data $\Delta$ -- the finer grid of the data, the longer it may take for each inner loop.
\end{rem}

Combining Theorem \ref{thm:local-rate} and Theorem \ref{thm:global-rate}, we can obtain the global complexity result of Algorithm \ref{alg:rpd} under Assumption \ref{ass:integer}:

\begin{cor}
Combining Theorem \ref{thm:local-rate} and Theorem \ref{thm:global-rate} implies $\widetilde O\pran{(m+n)^{3.5}\Delta+(m+n)^{1.5}\log\frac{1}{\epsilon}}$ global iteration complexity to find an $\epsilon$-accuracy solution. Since each iteration of Algorithm \ref{alg:rpd} requires $O(mn)$ operations, the global flop count equals $\widetilde O\pran{mn(m+n)^{3.5}\Delta+mn(m+n)^{1.5}\log\frac{1}{\epsilon}}$. Moreover, it is straightforward to implement Algorithm \ref{alg:rpd} in parallel to achieve $\widetilde O\pran{(m+n)^{3.5}\Delta+(m+n)^{1.5}\log\frac{1}{\epsilon}}$ global parallel depth. The same iteration complexity, flop counts and parallel depth hold to find a feasible $\epsilon$-accuracy solution by rounding (Algorithm \ref{alg:round}).
\end{cor}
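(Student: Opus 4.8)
The plan is to prove the corollary purely as a bookkeeping combination of Theorem \ref{thm:local-rate} and Theorem \ref{thm:global-rate}: I will split the entire run of Algorithm \ref{alg:rpd} at the identification time $T$ and bound the total number of inner PDHG iterations as $\sum_{t<T}\tau^t+\sum_{t\ge T}\tau^t$, then convert this iteration count into a flop count and a parallel depth, and finally append the rounding step. The two cited theorems supply exactly the ingredients for each phase: Theorem \ref{thm:global-rate} controls both the number of restarts $T$ before identification and the length $\tau^t$ of each such restart, while Theorem \ref{thm:local-rate} controls the (short) per-restart length and the linear contraction after identification.

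For the pre-identification phase I would first argue that $T$ is only logarithmic in the problem data. Theorem \ref{thm:global-rate} bounds $T$ by $\log_{1/\beta}\pran{\tfrac{96(1+\eta)H(m+n)^{2.5}\mathrm{KKT}(X^{0,0},p^{0,0},q^{0,0})}{\eta\delta}}+1$, whose only potentially large factor is $1/\delta$. To neutralize the dependence on the near-degeneracy metric I would invoke integrality: since the data are integer multiples of $\Delta$ and, by the total unimodularity of $A$ used in Proposition \ref{prop:norm-sharp}, any basic primal/dual solution arises from an integer inverse $\subA^{-1}$ via Cramer's rule, the nonzero reduced costs $C_{ij}-p_i^*-q_j^*$ and the positive optimal masses $X_{ij}^*$ are themselves integer multiples of $\Delta$. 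Hence $\delta=\Omega\pran{\Delta/\sqrt{m+n}}$, and $\log(1/\delta)$ collapses into the suppressed logarithmic factor, giving $T=\widetilde O(1)$. Multiplying this by the per-restart bound $\tau^t\le\tfrac{1536}{\beta}\tfrac{H}{\Delta}(m+n)^3$ valid for $t<T$ then produces the first, data-dependent summand of the claimed complexity.

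For the post-identification phase I would use part (ii) of Theorem \ref{thm:local-rate}. Its contraction $\mathrm{dist}((X^{t,0},p^{t,0},q^{t,0}),\mathcal Z^*)\le\beta^{t-T}(m+n)\mathrm{KKT}(X^{T,0},p^{T,0},q^{T,0})$ shows that $t-T=\widetilde O\pran{\log\tfrac1\epsilon}$ additional restarts drive the distance below $\epsilon$, and each such restart has length at most $\tfrac{16}{\beta}(m+n)^{1.5}$. Their product is the second summand $\widetilde O\pran{(m+n)^{1.5}\log\tfrac1\epsilon}$. Adding the two phases gives the stated global iteration complexity; multiplying by the $O(mn)$ cost of one iteration of Algorithm \ref{alg:rpd} (the two reductions $X\one_n,\ X^\top\one_m$, the rank-structured update $p\one_n^\top+\one_m q^\top$, the elementwise projection, and the KKT evaluation) gives the flop count. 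For the parallel depth I would observe that every one of these per-iteration operations is a reduction or an elementwise map, each realizable in $O(\log(m+n))$ depth, so the parallel depth equals the iteration count up to the suppressed logarithms.

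Finally I would fold in the rounding post-processing. By Lemma \ref{lem:round}, one call to Algorithm \ref{alg:round} costs $O(mn)$ flops and returns a feasible plan $X_{\mathrm{feas}}$ with $\|X_{\mathrm{feas}}-X\|_1\le 2(\|f-X\one_n\|_1+\|g-X^\top\one_m\|_1)$; since the right-hand side is controlled by the marginal-infeasibility block of the KKT residual, running Algorithm \ref{alg:rpd} to a constant-factor tighter accuracy (adding only $O(1)$ restarts, i.e.\ nothing at leading order) and then rounding yields a feasible $O(\epsilon)$-accurate plan within the same iteration count, flop count, and parallel depth. The main obstacle I anticipate is not any single estimate but the chain of accuracy conversions: the theorems are phrased in terms of distance-to-$\mathcal Z^*$ and KKT residual, whereas the target ``$\epsilon$-accuracy'' and the rounding guarantee live in objective/feasibility and $\ell_1$ terms, so I must verify that each conversion costs only a constant or polynomial factor absorbed by $\widetilde O$, and in particular that the integrality lower bound $\delta=\Omega(\Delta/\sqrt{m+n})$ really does render $T$ logarithmic and land the pre-identification work at the stated order.
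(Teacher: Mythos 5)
Your overall route coincides with the paper's: the corollary carries no separate proof there---it is exactly the bookkeeping you describe, namely bounding the total iteration count by $T\cdot\sup_{t<T}\tau^t$ plus the post-identification count $\widetilde O\left((m+n)^{1.5}\log\frac1\epsilon\right)$, with Theorem \ref{thm:global-rate} supplying $T$ and the pre-identification restart length, Theorem \ref{thm:local-rate} supplying the post-identification restart length and linear contraction, then multiplying by the $O(mn)$ per-iteration cost, observing that each per-iteration operation is an elementwise map or a reduction of logarithmic depth, and invoking Lemma \ref{lem:round} for a single $O(mn)$ rounding call. (One small imprecision: your ``constant-factor tighter accuracy'' before rounding should be a polynomial-factor tightening, to pass from the $\ell_2$-type KKT residual to the $\ell_1$ bound of Lemma \ref{lem:round} and then to the objective via $\|C\|_\infty$; since convergence is linear this only adds logarithmically many extra restarts, consistent with the paper's remark after Theorem \ref{thm:local-rate}.)

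The one step that does not survive scrutiny is your integrality argument for $\delta=\Omega\left(\Delta/\sqrt{m+n}\right)$. Cramer's rule together with total unimodularity controls \emph{basic} solutions, but Definition \ref{def:partition} defines $\delta$ at the particular optimal solution $(X^*,p^*,q^*)$ to which the iterates of Algorithm \ref{alg:rpd} converge, and this limit is in general not a vertex: whenever the optimal face is non-trivial, the limit can be a strict convex combination of vertices, so $\min_{(i,j)\in B_1}X^*_{ij}$ can be arbitrarily small even for integral data. For instance, with $m=n=2$, $C=\one_2\one_2^\top$, $f=g=(1/2,1/2)$, every feasible plan is optimal and $\Delta=1/2$, yet depending on the initialization the limit can have positive entries $s$ and $1/2-s$ with $s$ arbitrarily small; the analogous claim for the nonzero reduced costs $C_{ij}-p_i^*-q_j^*$ of the limiting dual fails for the same reason. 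The paper neither needs nor makes this claim: in Theorem \ref{thm:global-rate} the quantity $\delta$ enters the bound on $T$ only inside a logarithm, and the corollary absorbs $\log(1/\delta)$---together with $\log H$ and $\log\mathrm{KKT}(X^{0,0},p^{0,0},q^{0,0})$---into the $\widetilde O$, as the remark following that theorem makes explicit. Deleting your Cramer's-rule paragraph and keeping $T$ as the logarithmic (data-dependent) quantity of Theorem \ref{thm:global-rate}, rather than trying to make it data-free, leaves the rest of your accounting intact and yields the corollary exactly as the paper intends.
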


The rest of this section proves Theorem \ref{thm:global-rate}. We begin with a connection with a scaled version of PDHG iterates. Consider the following auxiliary LP
\begin{equation}\label{eq:tu-lp}
        \min_{\tilde x\geq 0,A\tilde x=\tilde b}\ \tilde c^\top\tilde x \ ,
\end{equation}
where  $\tilde c=\frac{1}{\Delta}\vvec(C)\in \mathbb Z^{mn}$, $\tilde b=\frac{1}{\Delta}\begin{bmatrix}
    f \\ g
\end{bmatrix}\in \mathbb Z^{m+n}$ and  $A=\begin{bmatrix}
        \one_n^\top \otimes \I_m \\ \I_n \otimes \one_m^\top
    \end{bmatrix}\in \mathbb Z^{(m+n)\times mn}$. 
PDHG on solving \eqref{eq:tu-lp} has update rules 
\begin{align*}
        & \tilde x^{k+1}\gets \mathrm{Proj}_{\mathbb R_+^{mn}}(\tilde x^k-\eta(\tilde c-A^\top \tilde y^k)) \\ 
        & \tilde y^{k+1}\gets \tilde y^k+\sigma(\tilde b-A(2\tilde x^{k+1}-\tilde x^k)) \ .
    \end{align*}
Recall the PDHG iterates for solving \eqref{eq:ot}
\begin{align*}
        &X^{k+1}\gets \mathrm{Proj}_{\mathbb R_+^{m\times n}}(X^{k}-\eta(C-p^{k}\one_n^\top-\one_m {q^{k}}^\top))\\
        & p^{k+1}\gets p^{k}+\sigma(f-(2X^{k+1}-X^{k})\one_n)\\
        & q^{k+1}\gets q^{k}+\sigma(g-(2X^{k+1}-X^{k})^\top\one_m) \ .
    \end{align*}
We know that for any $k\geq 0$, $\tilde x^{k}=\frac{1}{\Delta}\vvec(X^{k})$ and $\tilde y^k=\frac{1}{\Delta}\begin{bmatrix}
            p^k \\ q^k
        \end{bmatrix}$.

Note that iterates $(\tilde x^k,\tilde y^k)$ actually solve LP stated in \eqref{eq:tu-lp}. The lemma below establishes the sharpness of LP \eqref{eq:tu-lp}.
\begin{lem}\label{lem:sharp-global}
Denote $\alpha$ the sharpness constant of KKT system of \eqref{eq:tu-lp}. Then it holds that
    \begin{equation*}
        {\alpha}\geq \frac{1}{\tfrac{96H}{\Delta}(m+n)^{2.5}} \ ,
    \end{equation*}
    where $H=\|(\vvec(C),f,g)\|_\infty$.
\end{lem}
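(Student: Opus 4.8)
The plan is to realize the optimal set $\mathcal Z^*$ of the integer-data LP \eqref{eq:tu-lp} as a single polyhedron and to bound its sharpness (Hoffman) constant through the combinatorial structure of the defining matrix, exactly as sharpness was reduced to submatrix inverses in Lemma \ref{lem:sharp-local}. Writing $z=(x,y)$, I would first note that $z\in\mathcal Z^*$ precisely when primal feasibility $Ax=\tilde b$, nonnegativity $x\ge 0$, dual feasibility $A^\top y\le \tilde c$, and the zero-gap inequality $\tilde c^\top x-\tilde b^\top y\le 0$ all hold (the reverse gap inequality being automatic by weak duality on the feasible set), so that $\mathcal Z^*=\{z:Mz\le d\}$ with
\begin{equation*}
M=\begin{bmatrix} A & 0\\ -A & 0\\ -\I & 0\\ 0 & A^\top\\ \tilde c^\top & -\tilde b^\top\end{bmatrix},
\end{equation*}
and $\mathrm{KKT}(z)$ controls $\|[Mz-d]^+\|_2$ up to an absolute constant. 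The decisive fact here, and the whole reason \eqref{eq:tu-lp} is scaled by $1/\Delta$, is that under Assumption \ref{ass:integer} the matrix $M$ is \emph{integer}: $A$ is integer (indeed totally uni-modular), while $\tilde c,\tilde b$ are integer with $\|\tilde c\|_\infty,\|\tilde b\|_\infty\le H/\Delta$.

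I would then invoke the characterization of \cite[Corollary 1]{hinder2023worst} used for Lemma \ref{lem:sharp-local}, giving $\alpha\ge 1/\max_{G\in\mathcal G}\|G^{-1}\|_2$ where $\mathcal G$ ranges over nonsingular square submatrices of $M$. It suffices to bound $\|G^{-1}\|_2\le\|G^{-1}\|_F$ for each such $G$. By Cramer's rule every entry of $G^{-1}$ is $\pm\det(G')/\det(G)$ for a submatrix $G'$ of $G$, and since $G$ is a nonsingular integer matrix we get the crucial denominator bound $|\det G|\ge 1$. The problem is thereby reduced to bounding sub-determinants of $M$.

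For the sub-determinants I would use that every row of $M$ except the last lies in the totally uni-modular, $x/y$-block-diagonal part $\big[\begin{smallmatrix} A & 0\\ -A&0\\ -\I & 0\\ 0 & A^\top\end{smallmatrix}\big]$, whose square sub-determinants all lie in $\{-1,0,1\}$ (taking the transpose and appending $-\I$ rows preserve total uni-modularity, cf.\ Proposition \ref{prop:norm-sharp}(ii)). A minor avoiding the gap row is thus at most $1$ in magnitude; a minor using the gap row I would expand along it, writing it as a signed sum of at most (its size) products of a gap entry ($\le H/\Delta$) with a totally-uni-modular minor ($\le 1$). The dimension is controlled by observing that $A$ has only $m+n$ rows (and $A^\top$ only $m+n$ columns): for nonsingular $G$ each included $-\I$ row forces its own column to be present, so these rows/columns split off as an exact $\pm$ identity block, leaving a ``core'' $G_{\mathrm{core}}$ built only from submatrices of $A$, $A^\top$ and the gap row, of size $O(m+n)$ rather than $O(mn)$, with $\|G^{-1}\|_2=\|G_{\mathrm{core}}^{-1}\|_2$ and $|\det G_{\mathrm{core}}|\ge 1$. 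Counting the $O\pran{(m+n)^2}$ entries, each of magnitude $O\pran{(m+n)H/\Delta}$, then yields $\|G^{-1}\|_2=O\pran{(m+n)^{2.5}H/\Delta}$, and tracking constants gives the stated $\alpha\ge \big(\tfrac{96H}{\Delta}(m+n)^{2.5}\big)^{-1}$.

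The main obstacle is precisely this dimension-collapse step: a priori $M$ has $mn$ columns and $mn$ nonnegativity rows, so a naive determinant/Frobenius estimate produces spurious factors of $mn$. The technical heart is to verify that any nonsingular square submatrix $G$ of $M$, after row/column permutation, is block-diagonal with a $\pm$ identity block absorbing the active $-\I$ rows (which contributes nothing to $\|G^{-1}\|_2$ beyond a sign in $\det G$) and a genuinely $O(m+n)$-dimensional core assembled from submatrices of $A$ and $A^\top$ (size $\le m+n$ by the row count) together with the single gap row. Once this reduction is in place, the integrality bound $|\det G|\ge 1$ and the gap-row magnitude bound $\|(\tilde c,\tilde b)\|_\infty\le H/\Delta$ drive the estimate, and the remaining powers of $m+n$ together with the constant $96$ are routine bookkeeping.
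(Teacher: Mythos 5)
Your route is genuinely different from the paper's. The paper does not re-derive the sharpness bound at all: it observes that \eqref{eq:tu-lp} is a totally unimodular LP whose right-hand side and objective are integral with magnitude at most $H/\Delta$, quotes the packaged bound of \cite[Lemma 5]{hinder2023worst}, namely $\alpha \ge \bigl(4(m+n)+2+\tfrac{2H}{\Delta}(2(m+n)+1)^{2.5}\bigr)^{-1}$, and loosens it to $\bigl(\tfrac{96H}{\Delta}(m+n)^{2.5}\bigr)^{-1}$ by elementary arithmetic using $H/\Delta\ge 1$. What you propose is essentially a from-scratch proof of that cited lemma via the submatrix-inverse characterization of \cite[Corollary 1]{hinder2023worst} and Cramer's rule. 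Your ingredients are the right ones (integrality of the defining matrix gives $|\det G|\ge 1$; total unimodularity of the $A$-blocks bounds minors avoiding the gap row by $1$; cofactor expansion along the single gap row contributes the $H/\Delta$ factor; only $O(m+n)$ rows outside the $-\I$ block can be linearly independent), but the argument breaks at exactly the step you call its technical heart.

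The gap is this: after permutation, a nonsingular $G$ containing $r$ of the $-\I$ rows is block \emph{triangular}, $G=\left[\begin{smallmatrix} -\I_r & 0 \\ B & E\end{smallmatrix}\right]$, not block diagonal, because the $x$-columns paired with the selected $-\I$ rows still carry entries of $A$, $-A$, and of the gap row $\tilde c^\top$ in the remaining rows, so $B\neq 0$ in general. Hence $G^{-1}=\left[\begin{smallmatrix} -\I_r & 0 \\ E^{-1}B & E^{-1}\end{smallmatrix}\right]$, and your claim $\|G^{-1}\|_2=\|G_{\mathrm{core}}^{-1}\|_2$ is false: you must also control $\|E^{-1}B\|_2$. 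While $E$ is genuinely of size $O(m+n)$ (your row-counting for that part is sound), $B$ has $r$ columns and $r$ can be as large as $mn$, so a Frobenius-type estimate on $E^{-1}B$ reintroduces precisely the $\sqrt{mn}$ factor you set out to eliminate. Repairing this requires either the finer Hoffman-constant characterization that restricts attention to row subsets that can be simultaneously active with full row rank (which is what the cited lemma of \cite{hinder2023worst} carries out), or an entrywise Cramer argument for $E^{-1}B$ together with a reason the $\sqrt{r}$ aggregation can be avoided; neither is supplied. Two smaller points: your final bookkeeping ``$O((m+n)^2)$ entries each of magnitude $O((m+n)H/\Delta)$'' yields $\|G^{-1}\|_F=O((m+n)^{2}H/\Delta)$, not $O((m+n)^{2.5}H/\Delta)$, so the stated exponent and the constant $96$ are asserted rather than derived; and the duality-gap component of the KKT residual carries a $1/R$ (resp.\ $\Delta/R$) normalization that your identification of $\mathrm{KKT}(z)$ with $\|[Mz-d]^+\|_2$ ``up to an absolute constant'' silently drops.
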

\begin{proof}
    Notice that LP \eqref{eq:tu-lp} is totally uni-modular, that is, constraint matrix is totally uni-modular, and right-hand-side and objective vector are integral. By {\cite[Lemma 5]{hinder2023worst}}, the sharpness constant of \eqref{eq:tu-lp} can be lower bounded as
    \begin{equation*}
        \alpha \geq \frac{1}{4(m+n)+2+\tfrac{2H}{\Delta}(2(m+n)+1)^{2.5}} \ ,
    \end{equation*}
    Note that $\tfrac{H}{\Delta}\geq 1$ and $2(m+n)+1\leq 4(m+n)$ and we have
    \begin{equation*}
       4(m+n)+2+\tfrac{2H}{\Delta}(2(m+n)+1)^{2.5}\leq (1+\tfrac{2H}{\Delta})(2(m+n)+1)^{2.5}\leq \tfrac{96H}{\Delta}(m+n)^{2.5} \ ,
    \end{equation*}
    and we have ${\alpha}\geq \frac{1}{\tfrac{96H}{\Delta}(m+n)^{2.5}}$.
\end{proof}
The following lemma provides global bounds on restart frequency and builds linear convergence of iterates.
\begin{lem}\label{lem:global-rate}
    Consider $\{(X^{t,k},p^{t,k},q^{t,k})\}$ the iterates of Algorithm \ref{alg:rpd} with restart scheme \eqref{eq:restart-condition} for solving \eqref{eq:ot} satisfying Assumption \ref{ass:integer}. Suppose the step-size $\eta\leq \frac{1}{2\sqrt{m+n}}$. Then it holds for any $t\geq 0$ that the restart frequency can be upper bounded as 
    \begin{equation*}
        \tau^t\leq \frac{1536}{\beta}\frac{H}{\Delta}(m+n)^{3} \ ,
        \end{equation*}
    and the linear convergence holds
    \begin{equation*}
        \mathrm{dist}((X^{t,0},p^{t,0},q^{t,0}),\mathcal Z^*)\leq 96\beta^t{H}(m+n)^{2.5}\mathrm{KKT}(X^{0,0},p^{0,0},q^{0,0}) \ ,
    \end{equation*}
    where $H=\|(\vvec(C),f,g)\|_\infty$.
\end{lem}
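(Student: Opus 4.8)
The plan is to reduce the claim to the general restarted-PDHG guarantee in Lemma \ref{lem:basic} by transporting it through the exact correspondence, established above this lemma, between the iterates of Algorithm \ref{alg:rpd} and a run of restarted PDHG on the integer-data auxiliary LP \eqref{eq:tu-lp}. Writing $z^{t,k}=(\vvec(X^{t,k}),p^{t,k},q^{t,k})$ and $\tilde z^{t,k}=(\tilde x^{t,k},\tilde y^{t,k})$, the update formulas give $\tilde z^{t,k}=\frac{1}{\Delta}z^{t,k}$ for all $t,k$ under a common step-size $\eta$, because projection onto the nonnegative orthant commutes with positive scaling; correspondingly the optimal sets obey $\tilde{\mathcal Z}^*=\frac{1}{\Delta}\mathcal Z^*$. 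Every quantity that drives convergence therefore transfers between the two problems up to an explicit power of $\Delta$, and the advantage of \eqref{eq:tu-lp} is that its data are integral, so Lemma \ref{lem:sharp-global} supplies a sharpness estimate unavailable for the rational-data OT problem directly.

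The first step is to check that the two restart tests \eqref{eq:restart-condition} and \eqref{eq:restart-condition-lp} fire at identical inner iterations, so that the $\frac{1}{\Delta}$-scaled auxiliary run is genuinely the same algorithm as Algorithm \ref{alg:rpd}. Taking the norm bound for \eqref{eq:tu-lp} to be $\tilde R=R/\Delta$, a block-by-block inspection of the KKT residual shows that primal feasibility, dual feasibility and the $R$-normalized duality gap each scale by the same factor $\frac{1}{\Delta}$, so that $\widetilde{\mathrm{KKT}}(\tilde z)=\frac{1}{\Delta}\mathrm{KKT}(z)$ whenever $\tilde z=\frac{1}{\Delta}z$, where $\widetilde{\mathrm{KKT}}$ denotes the KKT residual of \eqref{eq:tu-lp}. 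Because both restart conditions are multiplicative (a ratio dropping below $\beta$), this common factor cancels, the restart times coincide, and the restart frequency $\tau^t$ of Algorithm \ref{alg:rpd} equals that of the auxiliary run.

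With the reduction in place I would invoke Lemma \ref{lem:basic} on \eqref{eq:tu-lp}; the hypothesis $\eta\le\frac{1}{2\sqrt{m+n}}=\frac{1}{2\|A\|_2}$ is exactly what that lemma requires. Part (ii) bounds the restart frequency by $16\|A\|_2/(\beta\alpha)$; inserting $\|A\|_2\le\sqrt{m+n}$ from Proposition \ref{prop:norm-sharp}(i) and $\alpha\ge\big(\tfrac{96H}{\Delta}(m+n)^{2.5}\big)^{-1}$ from Lemma \ref{lem:sharp-global} yields $\tau^t\le\frac{16\sqrt{m+n}}{\beta}\cdot\frac{96H}{\Delta}(m+n)^{2.5}=\frac{1536}{\beta}\frac{H}{\Delta}(m+n)^{3}$, the first claimed bound. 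For the linear rate, part (iii) gives $\mathrm{dist}(\tilde z^{t,0},\tilde{\mathcal Z}^*)\le\beta^t\frac{1}{\alpha}\widetilde{\mathrm{KKT}}(\tilde z^{0,0})$; substituting $\mathrm{dist}(\tilde z^{t,0},\tilde{\mathcal Z}^*)=\frac{1}{\Delta}\mathrm{dist}(z^{t,0},\mathcal Z^*)$ and $\widetilde{\mathrm{KKT}}(\tilde z^{0,0})=\frac{1}{\Delta}\mathrm{KKT}(z^{0,0})$ together with the same sharpness estimate produces the second displayed inequality of the lemma.

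The substantive content is concentrated entirely in the inputs $\|A\|_2\le\sqrt{m+n}$ and the integer-data sharpness bound of Lemma \ref{lem:sharp-global}, the latter resting on Assumption \ref{ass:integer} via the total unimodularity of $A$ and the integrality of $\tilde b,\tilde c$. The part I expect to require the most care is not an inequality but the bookkeeping of the reduction: one must verify that the iterate equivalence persists across restarts, which is precisely where the scale invariance of the KKT ratio (and hence of the restart decision) is needed, and one must transport the normalization constant as $\tilde R=R/\Delta$ so that the duality-gap block scales by the same $\frac{1}{\Delta}$ as the feasibility blocks; a careless choice of $\tilde R$ would break the clean proportionality and spoil the cancellation.
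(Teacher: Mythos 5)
Your proof is correct and follows essentially the same route as the paper's: pass to the $\tfrac{1}{\Delta}$-scaled integer-data auxiliary LP \eqref{eq:tu-lp}, apply Lemma \ref{lem:basic}(ii)--(iii) to that run, and convert back using $\|A\|_2=\sqrt{m+n}$, the sharpness bound of Lemma \ref{lem:sharp-global}, and the proportionality $\widetilde{\mathrm{KKT}}(\tilde z)=\tfrac{1}{\Delta}\mathrm{KKT}(z)$. If anything you are more explicit than the paper about the one point it takes for granted---that the scale invariance of the restart ratio forces the two runs to restart at the same inner iterations---and your final substitution performs the same $\Delta$-bookkeeping as the paper's own display.
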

\begin{proof}
Denote $\widetilde{\mathrm{KKT}}(\tilde x,\tilde y)=\left\|\begin{bmatrix}
        A\tilde x-\tilde b \\ 
        [A^\top \tilde y-\tilde c]^+ \\ \frac{\Delta}{R}(\tilde c^\top \tilde x-\tilde b^\top \tilde y)
    \end{bmatrix}\right\|_2$ and note that
\begin{equation*}
    \mathrm{KKT}(X^{t,k},p^{t,k},q^{t,k})=\widetilde{\mathrm{KKT}}(\tilde x^{t,k},\tilde y^{t,k}){\Delta} \ .
\end{equation*}
By part (ii) of Lemma \ref{lem:basic}, we have 
    \begin{equation*}
        \tau^t\leq \frac{16\|A\|_2}{\beta\alpha}\leq \frac{16(m+n)^{0.5}}{\beta \frac{1}{\tfrac{96H}{\Delta}(m+n)^{2.5}}}=\frac{1536}{\beta}\frac{H}{\Delta}(m+n)^3 \ .
    \end{equation*}
Furthermore, it holds that
    \begin{equation*}
    \begin{aligned}
        & \mathrm{dist}((X^{t,0},p^{t,0},q^{t,0}),\mathcal Z^*)=\Delta \mathrm{dist}((\tilde x^{t,0},\tilde y^{t,0}),\tfrac{1}{\Delta}\mathcal Z_v^*)\\
        & \leq \Delta\beta^{t}\frac{1}{\alpha}\widetilde{\mathrm{KKT}}(\tilde x^{t,0},\tilde y^{t,0})\leq96\beta^t{H}(m+n)^{2.5}\mathrm{KKT}(X^{0,0},p^{0,0},q^{0,0}) \ ,
    \end{aligned}
    \end{equation*}
    where the first equality is due to definition of $(\tilde x,\tilde y)$, the last equality is from Lemma \ref{lem:sharp-global}, and the inequality uses part (iii) of Lemma \ref{lem:basic}.
\end{proof}

We prove Theorem \ref{thm:global-rate} below that follows from Theorem \ref{thm:local-rate} and Lemma \ref{lem:global-rate}.
\begin{proof}[Proof of Theorem \ref{thm:global-rate}]
    The proof of Theorem \ref{thm:local-rate} implies that the condition $\|z^{t,0}-z^*\|_2\leq \frac{\eta}{1+\eta}\delta$ is sufficient to achieve identification (part (i) of Theorem \ref{thm:local-rate}). Therefore, in light of Lemma \ref{lem:global-rate},
    \begin{equation*}
        \mathrm{dist}((X^{t,0},p^{t,0},q^{t,0}),\mathcal Z^*) \leq 96\beta^tH(m+n)^{2.5}\mathrm{KKT}(X^{0,0},p^{0,0},q^{0,0})\leq \frac{\eta}{1+\eta}\delta  \ ,
    \end{equation*}
    which is equivalent to
    \begin{equation*}
        t\geq \log_{1/\beta}\pran{\frac{96(1+\eta)H(m+n)^{2.5}\mathrm{KKT}(X^{0,0},p^{0,0},q^{0,0})}{\eta\delta}} \ .
    \end{equation*}
    Thus $T\leq\log_{1/\beta}\pran{\frac{96(1+\eta)H(m+n)^{2.5}\mathrm{KKT}(X^{0,0},p^{0,0},q^{0,0})}{\eta\delta}}+1$ and the upper bound of restart length $\tau^t$ is proven in Lemma \ref{lem:global-rate}.
\end{proof}

\section{PDOT.jl: A Julia implemented solver for OT}\label{sec:numerical}
In this section, we present PDOT.jl, a OT solver based on Algorithm \ref{alg:rpd}. The solver is available at \href{https://github.com/jinwen-yang/PDOT.jl}{https://github.com/jinwen-yang/PDOT.jl}. We compare the numerical performance of PDOT with Sinkhorn and barrier method of Gurobi. 

PDOT leverages several algorithmic enhancements upon Algorithm \ref{alg:rpd} to improve its practical performance. More specifically,
\begin{itemize}
    \item {\bf Adaptive restart.} PDOT adopts similar adaptive restarting strategy of cuPDLP~\cite{lu2023cupdlp}. The restart scheme is described as follows: Define the restart candidate as 
    {\small
    \begin{equation*}
        \begin{aligned}
            (X_c^{t,k+1},p_c^{t,k+1},q_c^{t,k+1})&:=\mathrm{GetRestartCandidate}((X^{t,k+1},p^{t,k+1},q^{t,k+1}),(\bar X^{t,k+1},\bar p^{t,k+1},\bar q^{t,k+1}))\\
            &=\begin{cases}
            (X^{t,k+1},p^{t,k+1},q^{t,k+1}),& \mathrm{KKT}(X^{t,k+1},p^{t,k+1},q^{t,k+1})<\mathrm{KKT}(\bar X^{t,k+1},\bar p^{t,k+1},\bar q^{t,k+1}) \\ (\bar X^{t,k+1},\bar p^{t,k+1},\bar q^{t,k+1}),& \mathrm{otherwise} \ .
        \end{cases}
        \end{aligned}
    \end{equation*} 
    }
The algorithm restarts if one of three conditions holds:
\begin{enumerate}
    \item[(i)] (Sufficient decay in relative KKT error)
    \begin{equation*}
        \mathrm{KKT}(X_c^{t,k+1},p_c^{t,k+1},q_c^{t,k+1})\leq \beta_{\mathrm{sufficient}}\mathrm{KKT}(X_c^{t,0},p_c^{t,0},q_c^{t,0}) \ ,
    \end{equation*}
    \item[(ii)] (Necessary decay + no local progress in relative KKT error)
    \begin{equation*}
        \mathrm{KKT}(X_c^{t,k+1},p_c^{t,k+1},q_c^{t,k+1})\leq \beta_{\mathrm{necessary}}\mathrm{KKT}(X_c^{t,0},p_c^{t,0},q_c^{t,0}) \ ,
    \end{equation*}
    and
    \begin{equation*}
        \mathrm{KKT}(X_c^{t,k+1},p_c^{t,k+1},q_c^{t,k+1})>\mathrm{KKT}(X_c^{t,k},p_c^{t,k},q_c^{t,k}) \ ,
    \end{equation*}
    \item[(iii)] (Long inner loop)
    \begin{equation*}
        k\geq \beta_{\mathrm{artificial}}\times\texttt{total\_iteration} \ ,
    \end{equation*}
\end{enumerate}
where parameters $\beta_{\mathrm{sufficient}}=0.1$, $\beta_{\mathrm{necessary}}=0.9$ and $\beta_{\mathrm{artificial}}=0.36$.

    \item {\bf Adaptive step-size and primal weight.} The primal and the dual step-sizes of PDOT are reparameterized as
    \begin{equation*}
        \tau = \eta/\omega,\; \sigma=\eta\omega\quad \text{with}\;\  \eta,\omega>0\ ,
    \end{equation*}
    where $\eta$ (called step-size) controls the scale of the step-sizes, and $\omega$ (called primal weight) balances the primal and the dual progress. The step-size $\eta^{t,k}$ at inner iteration $k$ is selected as 
    find a step-size by a heuristic line search that satisfies
    \begin{equation*}\label{eq:step-size}
            \eta^{t,k}\leq \frac{\sqrt{\omega\left\|\vvec(X^{t,k+1}-X^{t,k})\right\|_2^2+\frac{1}{\omega}\left\|(p^{t,k+1}-p^{t,k},q^{t,k+1}-q^{t,k})\right\|_2^2}}{2\pran{(p^{t,k+1}-p^{t,k})^\top (X^{t,k+1}-X^{t,k})\one_n+(q^{t,k+1}-q^{t,k})^\top(X^{t,k+1}-X^{t,k})^\top\one_m}}\ ,
        \end{equation*}
    where $w$ is the current primal weight. See \cite{applegate2021practical} for more detail.
    
    The update of primal weight is specific during restart occurrences, thus infrequently. More precisely, the initial of $\omega=1$. Let $\Delta_X^t=\left\|\vvec(X^{t,0}-X^{t-1,0})\right\|_2$ and $\Delta_{(p,q)}^t=\left\|(p^{t,0}-p^{t-1,0},q^{t,0}-q^{t-1,0})\right\|_2$. PDOT initiates the primal weight update at the beginning of each new epoch with $\theta=0.5$.
    {\small
    \begin{equation*}
    \mathrm{PrimalWeightUpdate}(\Delta_X^t,z^{t-1,0},\Delta_{(p,q)}^t, \omega^{t-1}):=\begin{cases}
        \exp\pran{\theta \log\pran{\frac{\Delta_{(p,q)}^t}{\Delta_X^t}}+(1-\theta)\log\omega^{t-1}},\; & \Delta_X^t,\Delta_{(p,q)}^t>\epsilon_{\mathrm{zero}}\\
        \omega^{t-1}, \; & \mathrm{otherwise}
    \end{cases} \ .
    \end{equation*}
    }
    The adaptive step-size rule and primal weight update are extensions of those in PDLP \cite{applegate2021practical}.
\end{itemize}

\subsection{Experimental setup}
{\bf Datasets.} We conducted experiments using various algorithms on the DOTmark dataset, a discrete optimal transport benchmark~\cite{schrieber2016dotmark}. The dataset consists of ten distinct classes of images, with each class containing ten images. These images are derived from multiple sources: simulations based on different probability distributions (\texttt{CauchyDensity}, \texttt{GRFmoderate}, \texttt{GRFrough}, \texttt{GRFsmooth}, \texttt{LogGRF}, \texttt{LogitGRF}, \texttt{WhiteNoise}), geometric configurations (\texttt{Shapes}), classical test images (\texttt{ClassicImages}), and microscopic scientific observations (\texttt{Microscopy}). Within each class, the optimal transport problem can be formulated between any pair of images, resulting in 45 distinct problems per class and a total of 450 problems across the dataset.

\begin{figure}[ht!]
	\centering
     \hspace{-0.5cm}
	\begin{tabular}{c c c c}
		\includegraphics[width=\textwidth]{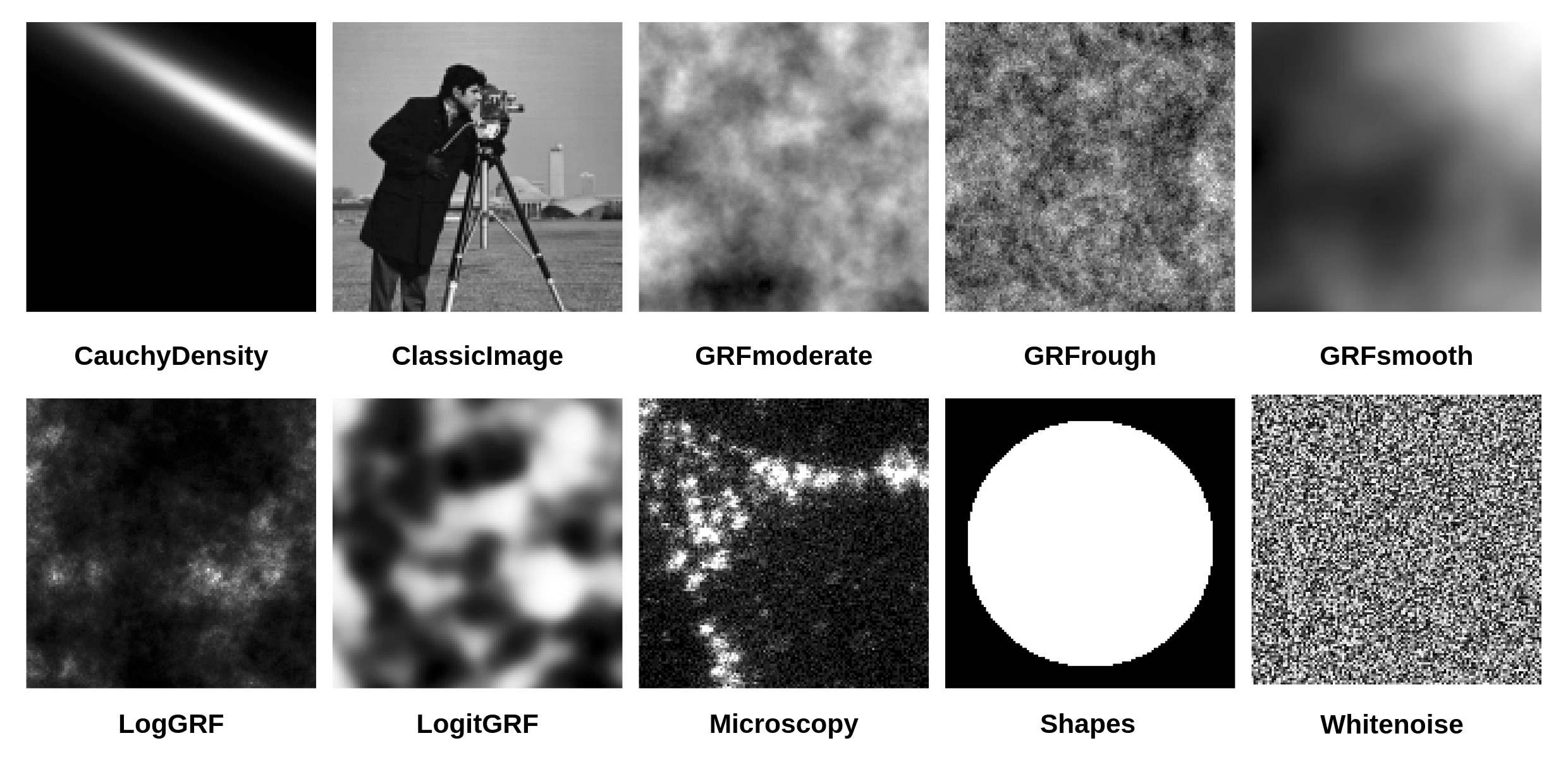}
	\end{tabular}
	\caption{Example images of each class from DOTmark set~\cite{schrieber2016dotmark}.}
	\label{fig:data}
\end{figure}

The images are processed at various resolutions, ranging from $32 \times 32$ to $128\times 128$ in our experiments. Statistics of the problems considered are summarized in Table \ref{tab:stats}.

\begin{table}[ht]
\centering
{\small
\begin{tabular}{|c|ccc|}
\hline
\textbf{Resolution}               & $32\times32$        & $64\times 64$        & $128\times128$         \\ \hline
\textbf{Dimension of cost matrix} & $1024\times1024$ & $4096\times4096$ & $16384\times16384$ \\\hline
\textbf{Size of cost matrix}       & 8.389MB   & 134.218MB & 2.147GB     \\\hline
\textbf{Number of variables}      & 1048576   & 16777216  & 268435456   \\\hline
\textbf{Number of constraints}    & 2048      & 8192      & 32768       \\ \hline
\end{tabular}}
\caption{Statistics of instances considered.}
\label{tab:stats}
\end{table}

As cost function, we consider the $\ell_1$, $\ell_2$ and $\ell_\infty$ distances as in \cite{zanetti2023interior}. Denote coordinate tuples $(\alpha_i,\beta_i)$ and $(\alpha_j,\beta_j)$ satisfying $A_{\alpha_i,\beta_i}=\vvec(A)_i$ and $B_{\alpha_j,\beta_j}=\vvec(B)_j$, respectively. Denote $C_{ij}^k$ the cost of moving mass from position $i$ to position $j$ under $\ell_k$ norm, $k\in\{1,2,\infty\}$ and it is defined as follows
\begin{equation*}
    C_{ij}^1=|\alpha_i-\alpha_j|+|\beta_i-\beta_j|, \ C_{ij}^2=\sqrt{(\alpha_i-\alpha_j)^2+(\beta_i-\beta_j)^2}, \ C_{ij}^\infty=\max\{|\alpha_i-\alpha_j|,|\beta_i-\beta_j|\} \ .
\end{equation*}

In summary, we consider 4050 OT instances in total with 3 resolutions, while there are 10 different image classes for each resolution. Each class contains 10 pictures and are grouped into 45 pairs, namely 45 pairs of marginal distributions. The cost matrix for each resolution is constructed using 3 different norms.

{\bf Software.} PDOT.jl and Sinkhorn are implemented in an open-source Julia module and utilizes {CUDA.jl} as the interface for working with NVIDIA CUDA GPUs using Julia. Due to the well-known numerical issue of Sinkhorn, log-domain stabilized Sinkhorn \cite{schmitzer2019stabilized} is implemented to improve the numerical stability in this paper. The penalty parameter of Sinkhorn is set to 0.01/0.001/0.0001 and the results under each parameter is reported separately. Barrier method (with crossover disabled) and network simplex implemented in Gurobi are compared as well. The running time of PDOT.jl and Sinkhorn is measured after pre-compilation in Julia. 

{\bf Rounding.} Transportation plans reported by all solvers are rounded using Algorithm \ref{alg:round} to obtain feasible solutions, and the reported results are based on the rounded feasible solutions.

{\bf Computing environment.} We use NVIDIA H100-PCIe-80GB GPU, with CUDA 12.3, for running PDOT and Sinkhorn, and we use Intel Xeon Gold 6248R CPU 3.00GHz with 128GB RAM and 8 threads for running Gurobi. The experiments are performed in Julia 1.9.2 and Gurobi 11.0.

{\bf Termination criteria.} PDOT terminates when the relative KKT error is no greater than the termination tolerance $\epsilon=10^{-4}$. Furthermore, we terminate Sinkhorn when the primal feasibility is smaller than or equal to tolerance $\epsilon=10^{-4}$. This is same as Sinkhorn implemented in existing OT solvers such as \cite{flamary2021pot}. We also set $10^{-4}$ tolerances for parameters \texttt{FeasibilityTol}, \texttt{OptimalityTol} and \texttt{BarConvTol} for Gurobi barrier method. We use the default termination for Gurobi network simplex method.

{\bf Time limit.} We impose a time limit of 3600 seconds on all instances.

{\bf Shifted geometric mean.} We report the shifted geometric mean of solve time to measure the performance of solvers on a certain collection of problems. More precisely, shifted geometric mean is defined as $\left(\prod_{i=1}^n (t_i+\mu)\right)^{1/n}-\mu$ where $t_i$ is the solve time for the $i$-th instance. We shift by $\mu=10$ and denote it SGM10. If the instance is unsolved, the solve time is always set to the corresponding time limit. 

{\bf Optimality gap.} We measure optimality using duality gap. More precisely, given primal-dual pair $(X,p,q)$, the duality gap is defined as 
\begin{equation*}
    \mathrm{Gap}(X,p,q)=\left|\langle C,X\rangle- f^\top p-g^\top q\right| \ .
\end{equation*}
We summarize the duality gap across different instances using (non-shifted) geometric mean.

\subsection{Results on benchmark dataset}
\begin{figure}[ht!]
	% \centering
    \hspace{-1cm}
	\begin{tabular}{c c c c}
		& \includegraphics[width=0.33\textwidth]{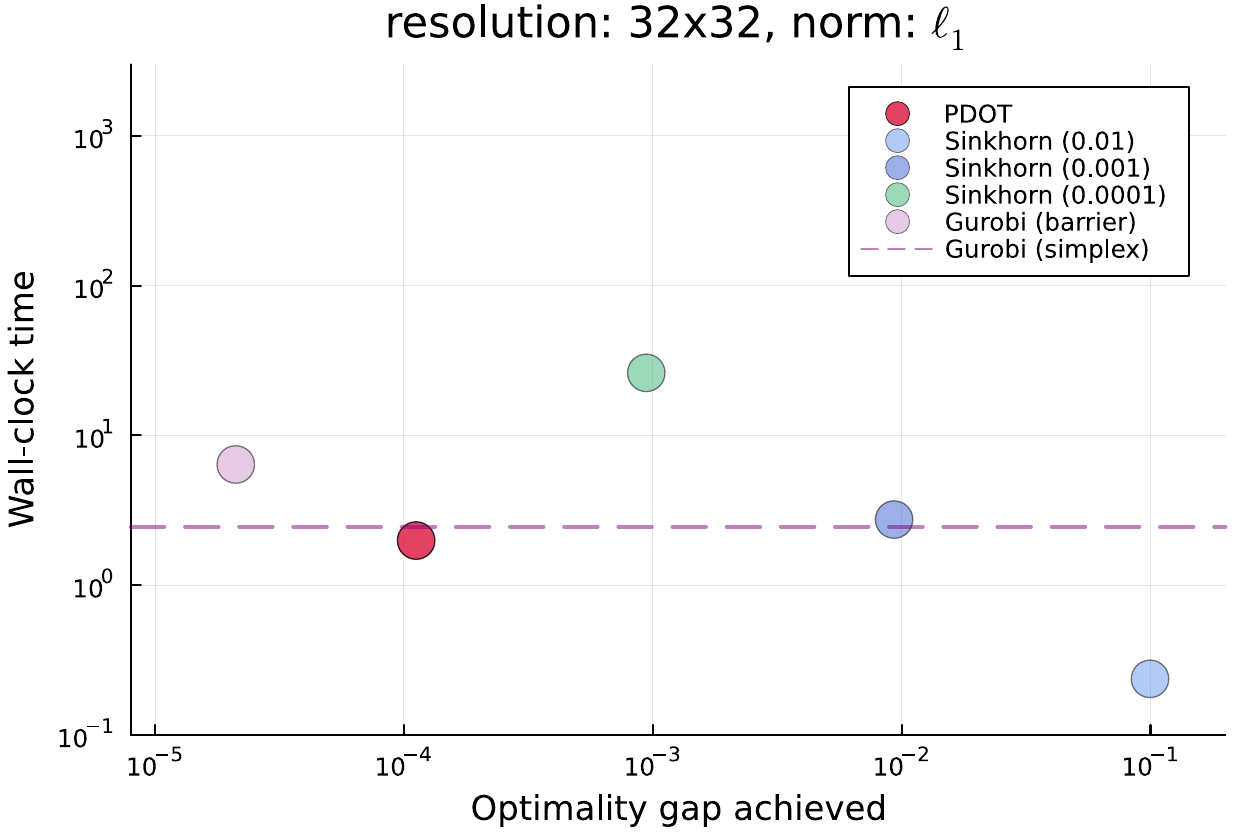}
        & \includegraphics[width=0.33\textwidth]{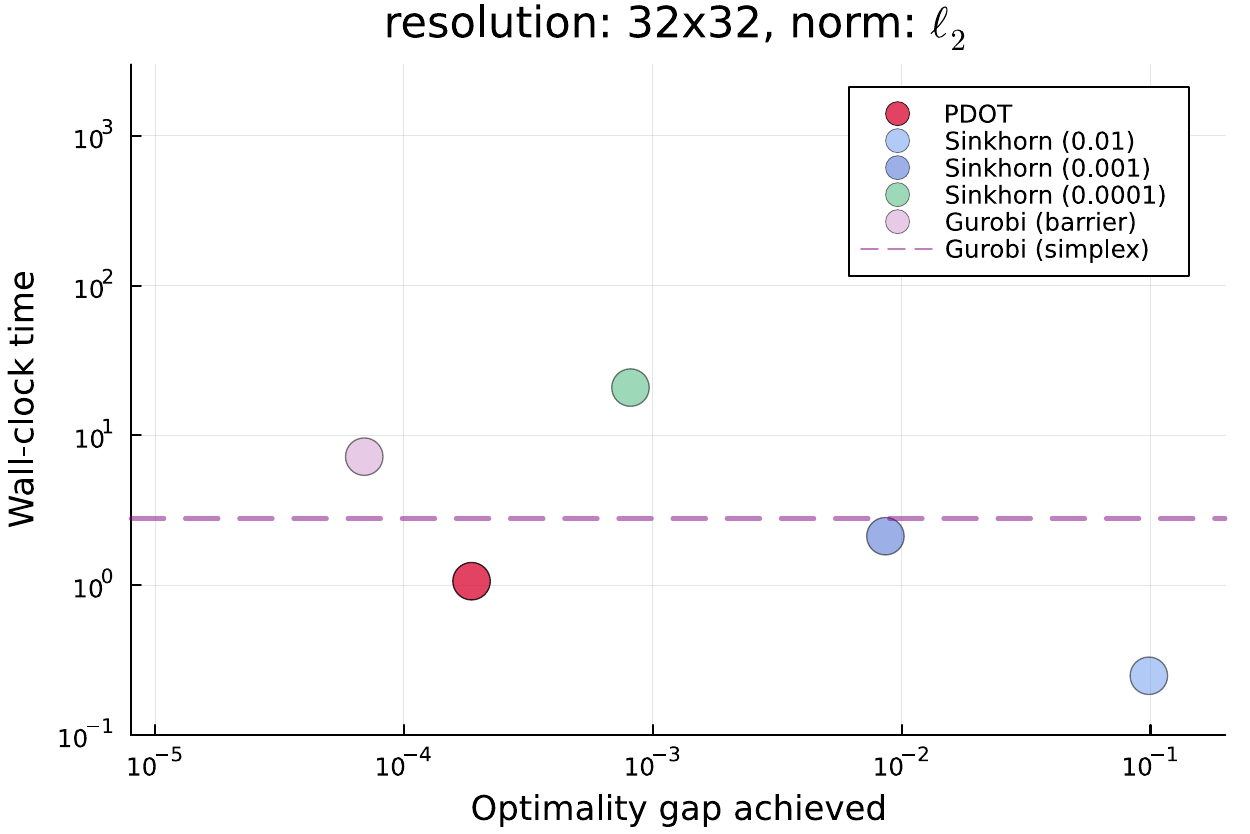}
        & \includegraphics[width=0.33\textwidth]{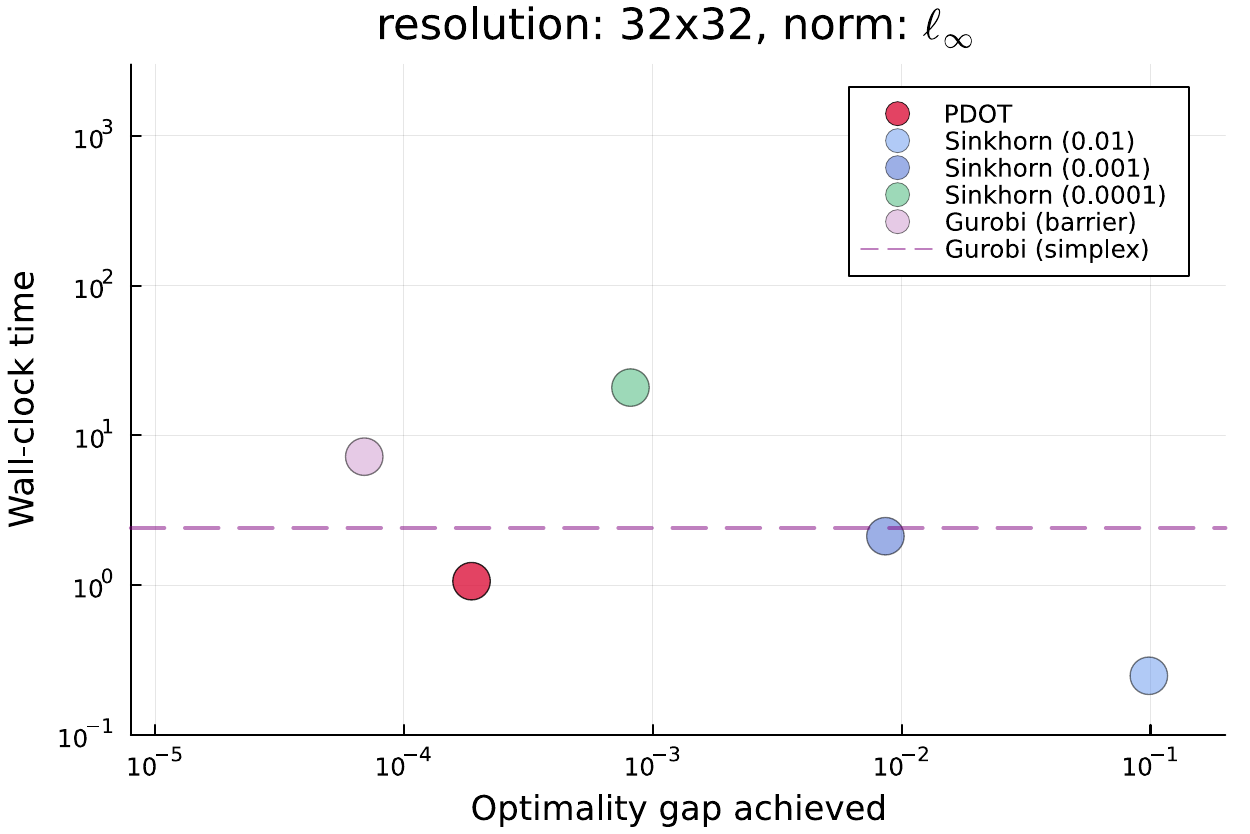}\\
        & \includegraphics[width=0.33\textwidth]{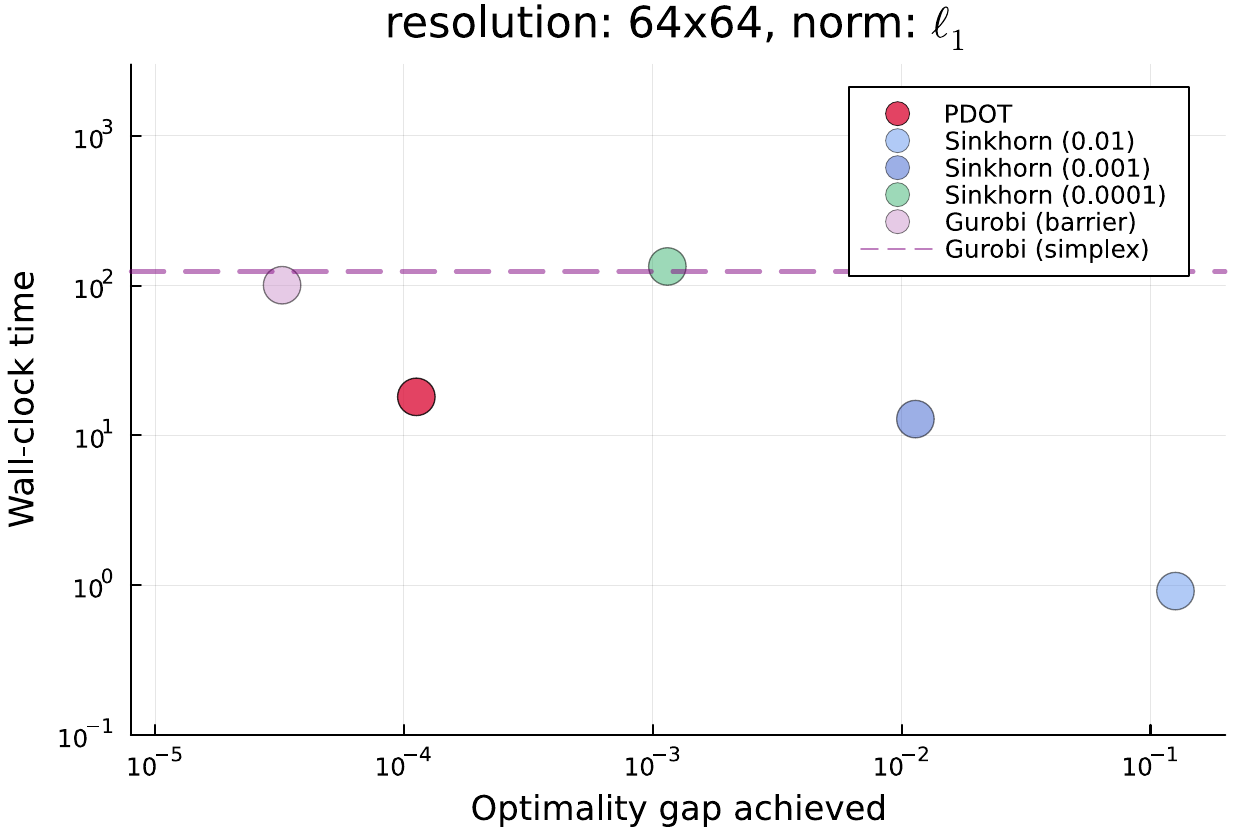}
        & \includegraphics[width=0.33\textwidth]{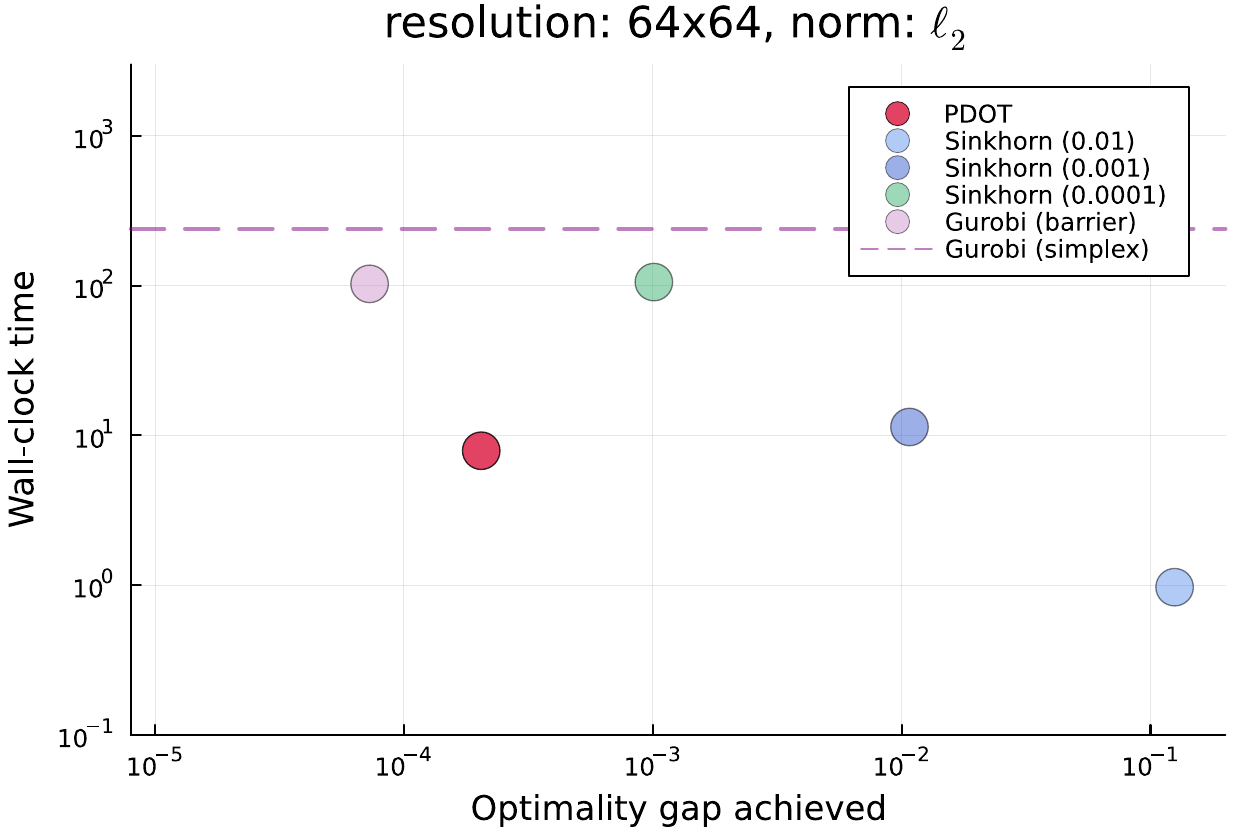}
        & \includegraphics[width=0.33\textwidth]{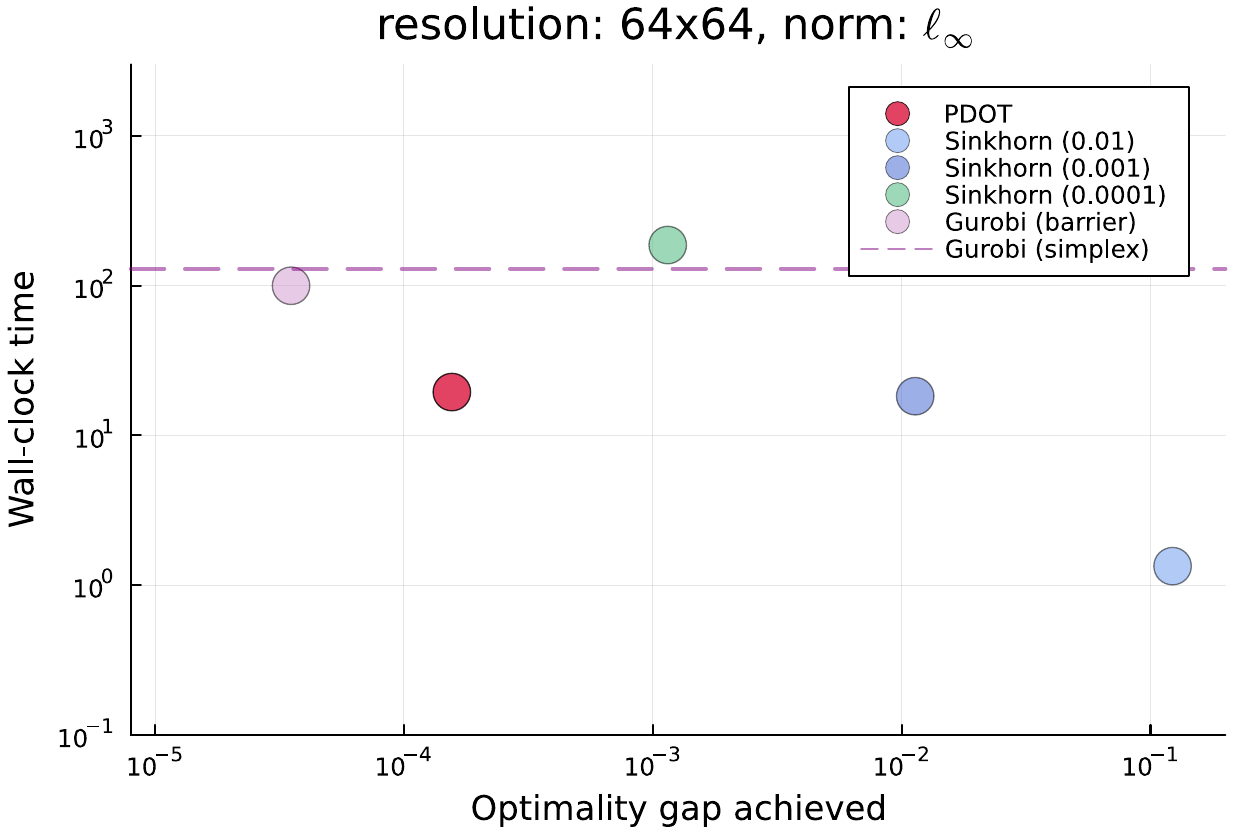}\\
        & \includegraphics[width=0.33\textwidth]{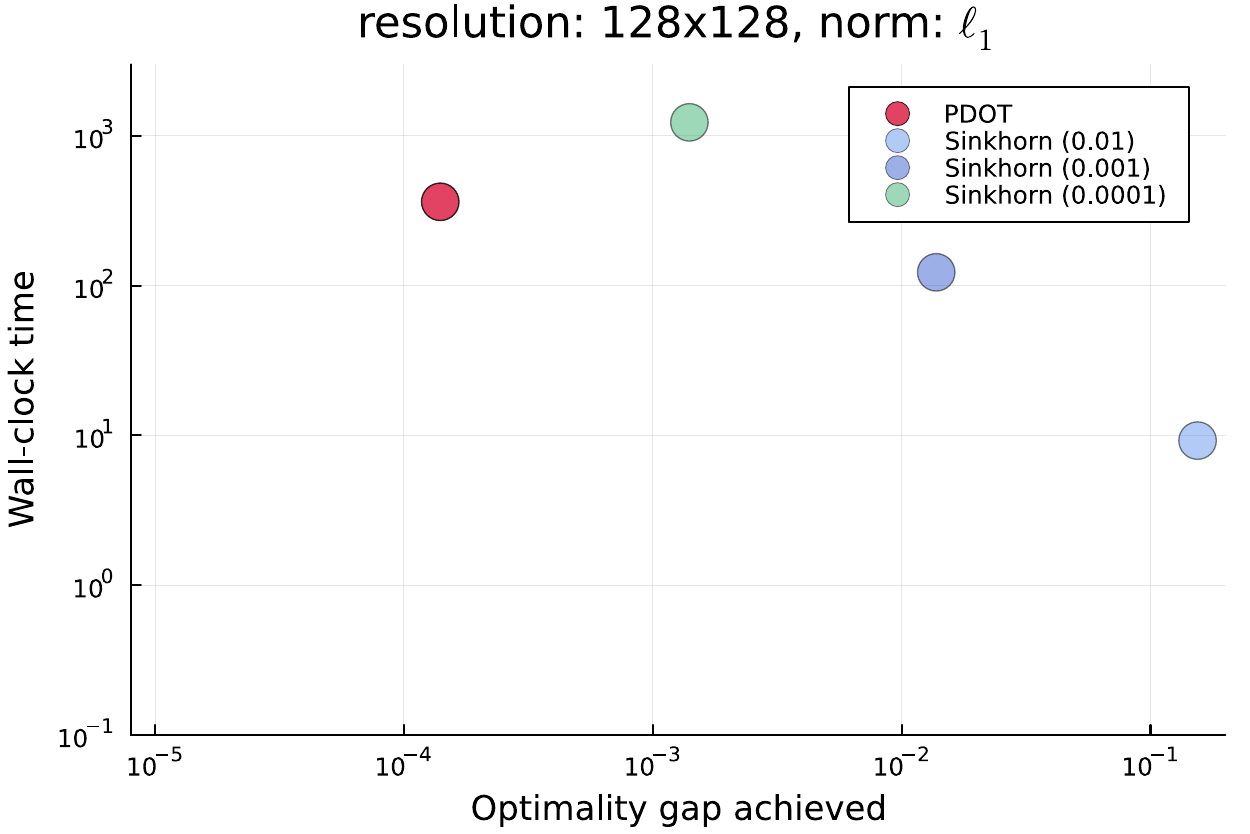}
        & \includegraphics[width=0.33\textwidth]{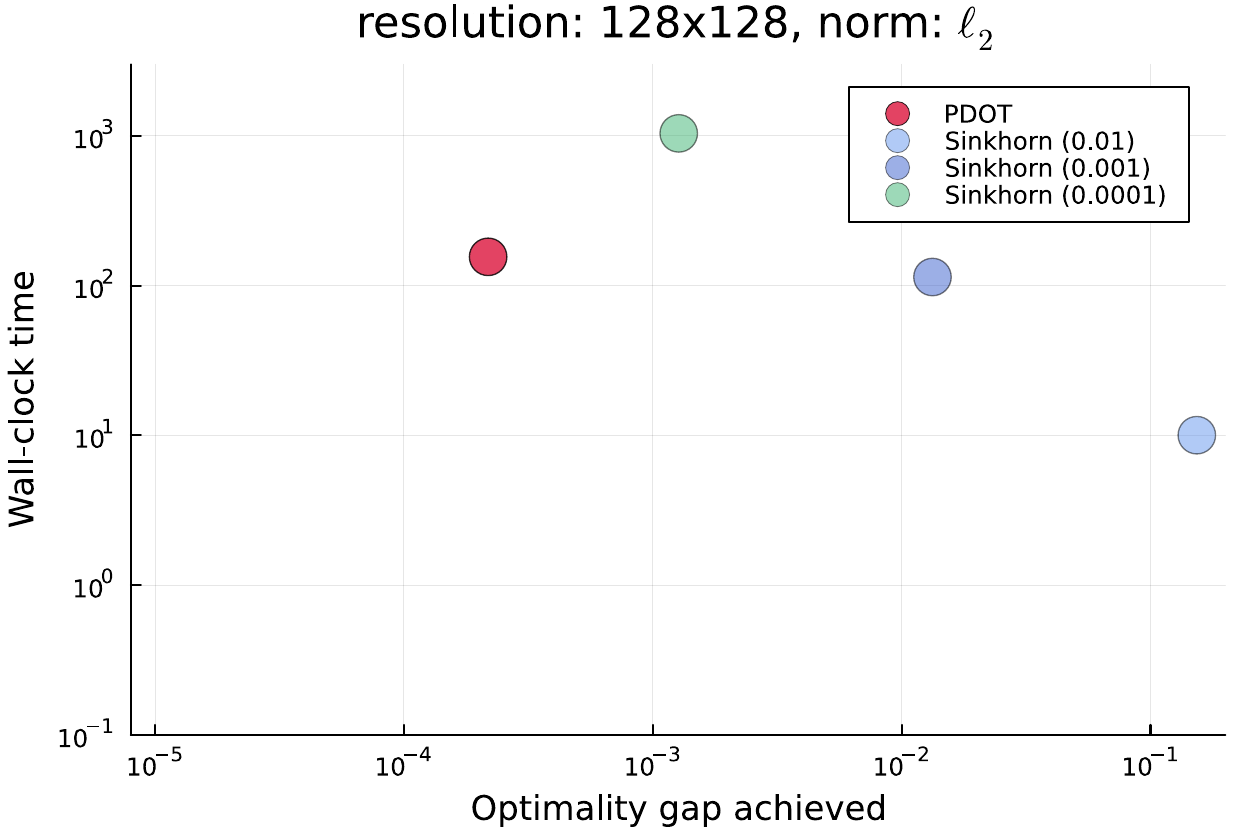}
        & \includegraphics[width=0.33\textwidth]{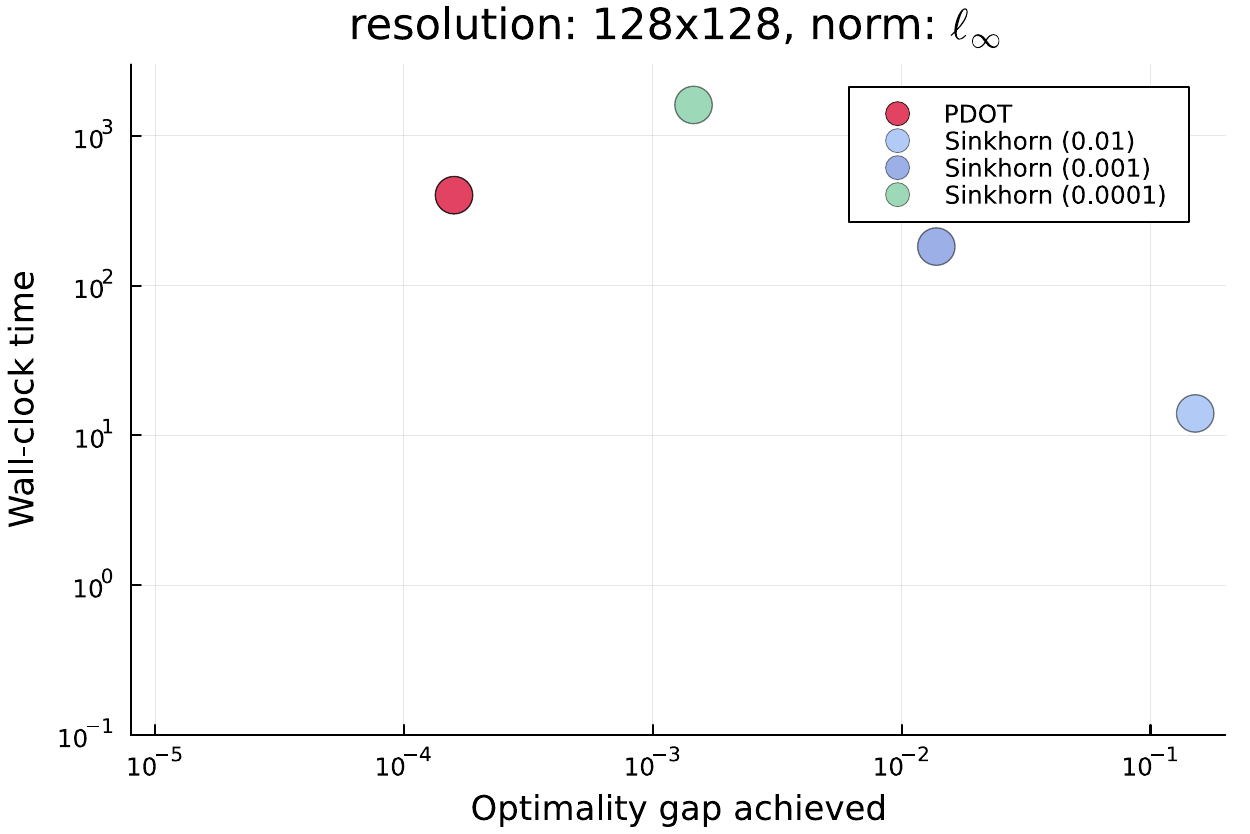}
	\end{tabular}
	\caption{Comparison of PDOT, Sinkhorn, and Gurobi.}
	\label{fig:time-gap}
\end{figure}
Figure \ref{fig:time-gap} compares PDOT with Sinkhorn and Gurobi on 4050 benchmark instances. We aggregate the results within each resolution and norm of cost matrix. The x-axis is the optimality gap achieved by the solution while the y-axis is the elapsed time in seconds. Each sub-figure is a specific combination of resolution and norm of cost matrix while each point represents a solver for solving OT instances. Particularly, Sinkhorn has three choices of penalty parameter $0.01$, $0.001$ and $0.0001$, and we set the tolerance of PDOT and Gurobi barrier method as $10^{-4}$. The dashed horizontal line is the time for network simplex in Gurobi to solve the problems to optimality (the default tolerance of Gurobi is $10^{-6}$). {For high resolution ($128\times 128$), Gurobi barrier and simplex fail for all instances with out-of-memory error, thus we do not report them in the figures.}
Figure \ref{fig:time-gap} yield several noteworthy observations:
\begin{itemize}
    \item PDOT demonstrates a competitive balance between wall-clock time and optimality gap across various test conditions. It notably exhibits lower gap values compared to Sinkhorn when allocated the same computation time. For instance, PDOT achieves an accuracy of $10^{-4}$ in approximately the same time it takes for Sinkhorn with a penalty of 0.001 to achieve a gap of $10^{-2}$. Furthermore, in the most demanding tests with a resolution of $128\times128$, Sinkhorn, at its lowest penalty setting, fails to solve all 450 instances within a one-hour limit. In contrast, PDOT resolves all instances to $10^{-4}$ accuracy within just 1000 seconds, showcasing its robustness and reliability in solving the optimal transport problem as defined in \eqref{eq:ot}.
    \item When compared against the Gurobi barrier, PDOT delivers solutions with comparable error levels but achieves this with a ten-fold speedup. The simplex method establishes an advantage in solving small instances with resolution $32\times 32$, in the sense that it finds the exact optimal solution with comparable time to PDOT. However, the simplex method faces scalability issues as demonstrated in solving instances with resolution $64\times 64$ -- PDOT is more than 10x faster than the simplex method. Additionally, it is notable that Gurobi (both barrier and simplex methods) runs out of memory when solving larger instances of $128\times 128$ resolution due to the extensive memory requirements of matrix factorization, which exceeds the simple storage needs for the instance data.
    \item The performance of the Sinkhorn algorithm is highly sensitive to the choice of penalty parameter. Selecting lower penalties (e.g., 0.0001) results in significantly longer computation times, though these are necessary to attain minimal gap values. The clear trade-off between computational time and solution quality is evident: Sinkhorn with a penalty of 0.01 is approximately 100 times faster than with a penalty of 0.0001, yet the latter can reach an accuracy of $10^{-3}$ while the former typically achieves gaps around $10^{-1}$.
\end{itemize}

\begin{table}[ht!]
\centering
{\scriptsize
\begin{tabular}{cccccccccc}
\hline
                    &                           & \multicolumn{2}{c}{\textbf{PDOT}}                  & \multicolumn{2}{c}{\textbf{\begin{tabular}[c]{@{}c@{}}Sinkhorn\\ (0.01)\end{tabular}}} & \multicolumn{2}{c}{\textbf{\begin{tabular}[c]{@{}c@{}}Sinkhorn\\ (0.001)\end{tabular}}} & \multicolumn{2}{c}{\textbf{\begin{tabular}[c]{@{}c@{}}Sinkhorn\\ (0.0001)\textsuperscript{\hyperlink{unsolved}{*}}\end{tabular}}}                \\
                    &                           & \texttt{time}           & \texttt{gap}              & \texttt{time}                  & \texttt{gap}                  & \texttt{time}                  & \texttt{gap}                   & \texttt{time}                   & \texttt{gap}               \\ \hline
\multirow{10}{*}{$\mathbf{\ell_1}$} & \texttt{CauchyDensity}    & 344.566                  & 0.000300                & 6.110                    & 0.162                   & 57.323                   & 0.0153                  & 632.372                  & 0.00153                 \\
                    & \texttt{ClassicImages}    & 563.336                  & 0.000194                & 12.378                   & 0.154                   & 201.252                  & 0.0137                  & 2198.295                 & 0.00138                 \\
                    & \texttt{GRFmoderate}      & 577.255                  & 0.000154                & 12.362                   & 0.155                   & 172.524                  & 0.0139                  & 2009.151                 & 0.00139                 \\
                    & \texttt{GRFrough}         & 919.888                  & 0.000187                & 16.229                   & 0.150                   & 336.061                  & 0.0125                  & 3418.149                 & 0.00128                 \\
                    & \texttt{GRFsmooth}        & 325.255                  & 0.000205                & 9.014                    & 0.161                   & 100.812                  & 0.0150                  & 1091.599                 & 0.00151                 \\
                    & \texttt{LogGRF}           & 236.010                  & 0.000205                & 6.046                    & 0.164                   & 58.199                   & 0.0158                  & 580.645                  & 0.00158                 \\
                    & \texttt{LogitGRF}         & 331.471                  & 0.000226                & 8.982                    & 0.156                   & 115.103                  & 0.0141                  & 1360.787                 & 0.00142                 \\
                    & \texttt{Microscopy} & 172.818                  & 0.000216                & 6.810                    & 0.146                   & 68.903                   & 0.0135                  & 781.662                  & 0.00135                 \\
                    & \texttt{Shapes}           & 119.582                  & 0.000164                & 3.834                    & 0.153                   & 31.467                   & 0.0140                  & 330.545                  & 0.00140                 \\
                    & \texttt{WhiteNoise}       & 681.050                  & 0.000351                & 14.118                   & 0.145                   & 591.640                  & 0.0109                  & 3600.027                 & 0.00124                 \\ \hdashline
                    \multirow{10}{*}{$\mathbf{\ell_2}$} & \texttt{CauchyDensity}    & 135.354         & 0.000397        & 6.455                 & 0.161                & 54.510                 & 0.0145               & 532.717                & 0.00135               \\
                    & \texttt{ClassicImages}    & 186.152         & 0.000281        & 12.463                & 0.154                & 150.252                & 0.0133               & 1578.028               & 0.00125               \\
                    & \texttt{GRFmoderate}      & 194.922         & 0.000197        & 12.454                & 0.155                & 154.271                & 0.0134               & 1615.615               & 0.00125               \\
                    & \texttt{GRFrough}         & 262.743         & 0.000174        & 19.135                & 0.149                & 325.212                & 0.0123               & 3219.705               & 0.00117               \\
                    & \texttt{GRFsmooth}        & 141.025         & 0.000404        & 9.302                 & 0.160                & 94.645                 & 0.0143               & 989.797                & 0.00134               \\
                    & \texttt{LogGRF}           & 119.140         & 0.000455        & 5.475                 & 0.163                & 46.821                 & 0.0148               & 460.166                & 0.00138               \\
                    & \texttt{LogitGRF}         & 134.225         & 0.000328        & 9.889                 & 0.156                & 105.627                & 0.0138               & 1080.814               & 0.00128               \\
                    & \texttt{Microscopy} & 120.759         & 0.000339        & 7.862                 & 0.146                & 59.656                 & 0.0129               & 594.133                & 0.00119               \\
                    & \texttt{Shapes}           & 82.127          & 0.000248        & 4.329                 & 0.152                & 32.663                 & 0.0135               & 303.963                & 0.00125               \\
                    & \texttt{WhiteNoise}       & 275.072         & 0.000282        & 18.079                & 0.143                & 729.127                & 0.0108               & 3600.027               & 0.00127               \\ \hdashline
                    \multirow{10}{*}{$\mathbf{\ell_\infty}$} & \texttt{CauchyDensity}    & 347.524        & 0.000361         & 8.902                 & 0.159                & 98.490                & 0.0152                & 1046.994               & 0.00153               \\
                      & \texttt{ClassicImages}    & 582.908        & 0.000196         & 17.211                & 0.151                & 236.919               & 0.0138                & 2535.618               & 0.00141               \\
                      & \texttt{GRFmoderate}      & 615.349        & 0.000179         & 19.042                & 0.152                & 261.632               & 0.0139                & 2765.548               & 0.00142               \\
                      & \texttt{GRFrough}         & 998.331        & 0.000185         & 25.689                & 0.145                & 514.443               & 0.0124                & 3554.389               & 0.00141               \\
                      & \texttt{GRFsmooth}        & 338.223        & 0.000271         & 13.461                & 0.159                & 159.348               & 0.0150                & 1617.257               & 0.00153               \\
                      & \texttt{LogGRF}           & 243.244        & 0.000234         & 9.0703                & 0.162                & 89.751                & 0.0156                & 802.649                & 0.00158               \\
                      & \texttt{LogitGRF}         & 371.306        & 0.000254         & 13.479                & 0.154                & 161.612               & 0.01438               & 1732.643               & 0.00145               \\
                      & \texttt{Microscopy} & 202.142        & 0.0001978        & 9.845                 & 0.144                & 96.024                & 0.0136                & 1068.548               & 0.00137               \\
                      & \texttt{Shapes}           & 145.248        & 0.000225         & 5.627                 & 0.151                & 51.059                & 0.0142                & 498.139                & 0.00142               \\
                      & \texttt{WhiteNoise}       & 925.432        & 0.000320         & 26.414                & 0.138                & 941.378               & 0.0107                & 3600.024               & 0.00149               \\ \hline
\end{tabular}
}
\caption{Detailed breakdown of the performance of PDOT and Sinkhorn (at various tolerance levels) across different norms of cost matrix under resolution $128\times 128$. Numbers in the parentheses stand for the selection of penalty parameter in Sinkhorn algorithm. \protect\textsuperscript{\protect\hypertarget{unsolved}{*}}Sinkhorn (0.0001) only solved 318 out of 450 instances within the 1-hour time limit. For failed instances, we utilize 3600s in computing the average running time, and the gap is computed at the termination solution.}
\label{tab:128}
\end{table}

Table \ref{tab:128} report the average results for each class with different norm of cost matrix under resolution $128\times 128$, in terms of time and gap. The results verify the observations summarized from Figure \ref{fig:time-gap} and demonstrate the consistency across different image classes. Moreover, it is noteworthy that Sinkhorn with most aggressive penalty parameter $0.0001$ cannot solve all the 450 instances within time limit (more precisely, it solves 318 out of 450 in total). This demonstrates it is challenging for Sinkhorn to provide high-quality solution efficiently. Results for medium-scale and small-scale instances are summarized in Table \ref{tab:64} and \ref{tab:32} that are deferred to Appendix \ref{app:tab}.

In summary, PDOT emerges as a particularly effective solver for optimal transport problems, offering a remarkable balance between speed and accuracy, making it well-suited for large-scale applications. Compared to the Sinkhorn algorithm, it can achieve higher accuracy with the same wall-clock time, and compared to LP solvers such as Gurobi, it is more scalable.

\section*{Acknowledgement}
The authors thank Yang (Justin) Meng for running preliminary numerical experiments.

\bibliographystyle{amsplain}
\bibliography{ref-papers}

\newpage
\appendix
\section{Tables}\label{app:tab}

\begin{table}[ht!]
% \centering
\hspace{-2.2cm}
{\small
\begin{tabular}{ccccccccccccc}
\hline
                    &                           & \multicolumn{2}{c}{\textbf{PDOT}}                  & \multicolumn{2}{c}{\textbf{\begin{tabular}[c]{@{}c@{}}Sinkhorn\\ (0.01)\end{tabular}}} & \multicolumn{2}{c}{\textbf{\begin{tabular}[c]{@{}c@{}}Sinkhorn\\ (0.001)\end{tabular}}} & \multicolumn{2}{c}{\textbf{\begin{tabular}[c]{@{}c@{}}Sinkhorn\\ (0.0001)\end{tabular}}} & \multicolumn{2}{c}{\textbf{\begin{tabular}[c]{@{}c@{}}Gurobi\\ (barrier)\end{tabular}}}           &\textbf{\begin{tabular}[c]{@{}c@{}}Gurobi\\ (simplex)\end{tabular}}    \\
                    &                           & \texttt{time}           & \texttt{gap}              & \texttt{time}                  & \texttt{gap}                  & \texttt{time}                  & \texttt{gap}                   & \texttt{time}                   & \texttt{gap}   & \texttt{time}                   & \texttt{gap}       & \texttt{time}          \\ \hline
\multirow{10}{*}{$\mathbf{\ell_1}$} & \texttt{CauchyDensity}    & 14.941         & 0.000218         & 0.527                 & 0.134                & 6.356                 & 0.0127                & 61.525                 & 0.00127    & 112.160 & 3.78E-05 & 140.974      \\
                    & \texttt{ClassicImages}    & 23.441         & 0.000192         & 1.194                 & 0.126                & 18.443                & 0.0113                & 226.544                & 0.00114   & 139.758 & 4.45E-05 & 160.475         \\
                    & \texttt{GRFmoderate}      & 26.379         & 0.000147         & 1.141                 & 0.128                & 15.191                & 0.0116                & 178.200                & 0.00117     & 124.344 & 4.93E-05   & 165.234      \\
                    & \texttt{GRFrough}         & 31.857         & 0.000218         & 1.576                 & 0.122                & 26.529                & 0.0103                & 364.045                & 0.00104     & 131.186 & 5.88E-05     & 131.452   \\
                    & \texttt{GRFsmooth}        & 17.149         & 0.000172         & 0.745                 & 0.133                & 10.010                & 0.0123                & 108.477                & 0.00124   & 120.529 & 5.67E-05  & 169.657         \\
                    & \texttt{LogGRF}           & 13.053         & 0.000163         & 0.663                 & 0.132                & 8.807                 & 0.0124                & 98.464                 & 0.00125    & 133.167 & 3.95E-05   & 163.883     \\
                    & \texttt{LogitGRF}         & 16.715         & 0.000180         & 0.808                 & 0.129                & 10.538                & 0.0118                & 123.295                & 0.00119   & 134.784 & 5.05E-05  & 208.843      \\
                    & \texttt{Microscopy} & 10.690         & 0.000155         & 0.689                 & 0.116                & 7.690                 & 0.0109                & 76.824                 & 0.00110    & 33.504  & 4.96E-05  & 26.772     \\
                    & \texttt{Shapes}           & 6.185          & 0.000121         & 0.356                 & 0.125                & 3.330                 & 0.0115                & 28.932                 & 0.00115    & 39.515  & 2.69E-05  & 35.322         \\
                    & \texttt{WhiteNoise}       & 32.531         & 0.000277         & 1.484                 & 0.117                & 39.290                & 0.0094                & 547.665                & 0.00095   & 123.730 & 6.60E-05  & 247.829         \\ \hdashline
    \multirow{10}{*}{$\mathbf{\ell_2}$} & \texttt{CauchyDensity}    & 6.447                    & 0.000400                & 0.507                    & 0.133                   & 4.667                    & 0.0118                  & 45.613                   & 0.00110    & 104.047 & 7.37E-05 & 375.794     \\
                    & \texttt{ClassicImages}    & 8.792                    & 0.000298                & 1.120                    & 0.126                   & 13.400                   & 0.0108                  & 141.243                  & 0.00102 & 140.919 & 7.43E-05    & 307.649     \\
                    & \texttt{GRFmoderate}      & 8.994                    & 0.000270                & 1.007                    & 0.127                   & 12.201                   & 0.0110                  & 129.893                  & 0.00103  & 134.283 & 8.01E-05  & 490.623         \\
                    & \texttt{GRFrough}         & 11.239                   & 0.000242                & 1.848                    & 0.120                   & 26.148                   & 0.0010                  & 287.642                  & 0.00095        & 132.503 & 7.93E-05   & 186.218     \\
                    & \texttt{GRFsmooth}        & 7.154                    & 0.000321                & 0.733                    & 0.132                   & 8.405                    & 0.0116                  & 86.039                   & 0.00109         & 122.371 & 7.43E-05 & 565.480      \\
                    & \texttt{LogGRF}           & 6.664                    & 0.000409                & 0.665                    & 0.132                   & 6.363                    & 0.0117                  & 64.517                   & 0.00108 & 140.790 & 7.73E-05    & 328.094       \\
                    & \texttt{LogitGRF}         & 6.992                    & 0.000308                & 0.785                    & 0.128                   & 8.896                    & 0.0111                  & 91.368                   & 0.00103          & 130.695 & 7.54E-05  & 561.870     \\
                    & \texttt{Microscopy} & 6.672                    & 0.000269                & 0.720                    & 0.115                   & 7.325                    & 0.0101                  & 73.542                   & 0.00093  & 35.555  & 8.01E-05      & 27.994       \\
                    & \texttt{Shapes}           & 4.255                    & 0.000208                & 0.395                    & 0.124                   & 3.525                    & 0.0108                  & 29.879                   & 0.00100           & 42.535  & 7.06E-05  & 49.325 \\
                    & \texttt{WhiteNoise}       & 13.327                   & 0.000233                & 2.018                    & 0.115                   & 43.937                   & 0.00913                 & 487.518                  & 0.00089              & 127.682 & 7.43E-05  & 296.125 \\ \hdashline
                    \multirow{10}{*}{$\mathbf{\ell_\infty}$} & \texttt{CauchyDensity}    & 14.757                   & 0.000269                & 0.678                    & 0.131                   & 8.189                    & 0.0125                  & 86.007                   & 0.00126  & 109.872 & 4.47E-05     & 151.354     \\
                      & \texttt{ClassicImages}    & 25.751                   & 0.000193                & 1.489                    & 0.123                   & 22.275                   & 0.0114                  & 254.626                  & 0.00115           & 136.253 & 5.37E-05 & 183.669   \\
                      & \texttt{GRFmoderate}      & 27.400                   & 0.000176                & 1.465                    & 0.125                   & 22.347                   & 0.0116                  & 265.739                  & 0.00117          & 124.146 & 5.97E-05 & 196.212    \\
                      & \texttt{GRFrough}         & 37.051                   & 0.000206                & 2.432                    & 0.117                   & 42.172                   & 0.0102                  & 540.186                  & 0.00104         & 133.461 & 5.49E-05 & 139.808      \\
                      & \texttt{GRFsmooth}        & 17.307                   & 0.000285                & 1.102                    & 0.130                   & 15.366                   & 0.0124                  & 154.365                  & 0.00125            & 116.114 & 4.58E-05 & 171.876  \\
                      & \texttt{LogGRF}           & 13.004                   & 0.000295                & 0.960                    & 0.131                   & 11.044                   & 0.0126                  & 107.104                  & 0.00127          & 127.305 & 5.35E-05 & 163.617    \\
                      & \texttt{LogitGRF}         & 16.446                   & 0.000247                & 1.122                    & 0.125                   & 16.491                   & 0.0117                  & 216.312                  & 0.00119           & 134.200 & 4.99E-05 & 207.557     \\
                      & \texttt{Microscopy} & 11.790                   & 0.000167                & 1.000                    & 0.114                   & 11.877                   & 0.0108                  & 121.560                  & 0.00109             & 34.639  & 4.63E-05 & 26.952   \\
                      & \texttt{Shapes}           & 7.678                    & 0.000203                & 0.516                    & 0.123                   & 5.449                    & 0.0116                  & 42.469                   & 0.00117          & 38.624  & 2.99E-05   & 35.084    \\
                      & \texttt{WhiteNoise}       & 39.123                   & 0.000244                & 2.819                    & 0.111                   & 58.232                   & 0.0092                  & 725.494                  & 0.000942    & 127.640 & 5.64E-05  & 237.846          \\ \hline
\end{tabular}
}
\caption{Detailed breakdown of the performance of PDOT, Sinkhorn (at various tolerance levels), and Gurobi (barrier and simplex method) across different norms of cost matrix under resolution $64\times 64$. Numbers in the parentheses stand for the selection of penalty parameter in Sinkhorn algorithm.}
\label{tab:64}
\end{table}

\begin{table}[ht!]
\hspace{-1.8cm}
{\small
\begin{tabular}{ccccccccccccc}
\hline
                    &                           & \multicolumn{2}{c}{\textbf{PDOT}}                  & \multicolumn{2}{c}{\textbf{\begin{tabular}[c]{@{}c@{}}Sinkhorn\\ (0.01)\end{tabular}}} & \multicolumn{2}{c}{\textbf{\begin{tabular}[c]{@{}c@{}}Sinkhorn\\ (0.001)\end{tabular}}} & \multicolumn{2}{c}{\textbf{\begin{tabular}[c]{@{}c@{}}Sinkhorn\\ (0.0001)\end{tabular}}} & \multicolumn{2}{c}{\textbf{\begin{tabular}[c]{@{}c@{}}Gurobi\\ (barrier)\end{tabular}}}           & \textbf{\begin{tabular}[c]{@{}c@{}}Gurobi\\ (simplex)\end{tabular}}    \\
                    &                           & \texttt{time}           & \texttt{gap}              & \texttt{time}                  & \texttt{gap}                  & \texttt{time}                  & \texttt{gap}                   & \texttt{time}                   & \texttt{gap}   & \texttt{time}                   & \texttt{gap}       & \texttt{time}          \\ \hline
\multirow{10}{*}{$\mathbf{\ell_1}$} & \texttt{CauchyDensity}    & 1.892          & 0.000127         & 0.103                 & 0.107                & 1.203                 & 0.0104                & 10.143                & 0.00104  & 6.330 & 3.42E-05       & 2.583             \\
                    & \texttt{ClassicImages}    & 2.398          & 0.000194         & 0.346                 & 0.098                & 4.004                 & 0.0091                & 43.376                & 0.00092           & 7.548 & 3.16E-05      & 2.675   \\
                    & \texttt{GRFmoderate}      & 2.215          & 0.000205         & 0.208                 & 0.100                & 3.160                 & 0.0094                & 34.345                & 0.00095          & 7.427 & 4.44E-05   & 2.612    \\
                    & \texttt{GRFrough}         & 3.191          & 0.000185         & 0.600                 & 0.093                & 5.438                 & 0.0084                & 66.924                & 0.00085         & 8.103 & 4.39E-05  & 2.511       \\
                    & \texttt{GRFsmooth}        & 1.858          & 0.000130         & 0.127                 & 0.106                & 2.399                 & 0.0102                & 19.054                & 0.00102      & 6.836 & 4.58E-05  & 2.395          \\
                    & \texttt{LogGRF}           & 1.481          & 0.000157         & 0.097                 & 0.107                & 1.473                 & 0.0104                & 12.888                & 0.00104         & 7.347 & 3.69E-05 & 2.335          \\
                    & \texttt{LogitGRF}         & 1.841          & 0.000226         & 0.134                 & 0.100                & 2.324                 & 0.0094                & 25.351                & 0.00095    & 7.990 & 3.33E-05   & 2.586              \\
                    & \texttt{Microscopy} & 1.503          & 0.000124         & 0.111                 & 0.100                & 1.684                 & 0.0096                & 15.725                & 0.00096        & 4.143 & 3.66E-05   & 2.847          \\
                    & \texttt{Shapes}           & 0.978          & 0.000086         & 0.077                 & 0.097                & 0.973                 & 0.0091                & 7.396                 & 0.00092   & 1.775 & 3.08E-05  & 1.425              \\
                    & \texttt{WhiteNoise}       & 2.617          & 0.000326         & 0.579                 & 0.090                & 5.688                 & 0.0079                & 87.947                & 0.0008       & 7.640 & 4.34E-05   & 2.588        \\ \hdashline
    \multirow{10}{*}{$\mathbf{\ell_2}$} & \texttt{CauchyDensity}    & 1.003                    & 0.000279                & 0.093                    & 0.106                   & 0.870                    & 0.0094                  & 8.706                    & 0.00087     & 6.939 & 7.09E-05       & 3.026            \\
                    & \texttt{ClassicImages}    & 1.169                    & 0.000266                & 0.291                    & 0.097                   & 2.670                    & 0.0085                  & 28.862                   & 0.00081         & 8.642 & 7.31E-05   & 2.974            \\
                    & \texttt{GRFmoderate}      & 1.127                    & 0.000237                & 0.215                    & 0.099                   & 2.229                    & 0.0087                  & 25.149                   & 0.00082         & 8.813 & 7.61E-05 & 2.910           \\
                    & \texttt{GRFrough}         & 1.446                    & 0.000173                & 0.661                    & 0.092                   & 4.866                    & 0.0079                  & 55.631                   & 0.00077               & 9.012 & 7.81E-05  & 2.782  \\
                    & \texttt{GRFsmooth}        & 0.975                    & 0.000276                & 0.124                    & 0.104                   & 1.600                    & 0.0092                  & 16.899                   & 0.00086           & 7.892 & 6.68E-05 & 3.023       \\
                    & \texttt{LogGRF}           & 0.901                    & 0.000302                & 0.0898                   & 0.106                   & 0.855                    & 0.0094                  & 9.417                    & 0.00087            & 8.566 & 8.44E-05     & 2.937      \\
                    & \texttt{LogitGRF}         & 0.971                    & 0.000302                & 0.140                    & 0.100                   & 1.672                    & 0.0087                  & 18.638                   & 0.00082            & 8.583 & 7.45E-05 & 2.951     \\
                    & \texttt{Microscopy} & 0.934                    & 0.000258                & 0.114                    & 0.100                   & 1.253                    & 0.0087                  & 13.660                   & 0.00081             & 4.787 & 7.80E-05 & 3.013     \\
                    & \texttt{Shapes}           & 0.724                    & 0.000138                & 0.094                    & 0.096                   & 0.689                    & 0.0083                  & 6.185                    & 0.00077             & 1.940 & 6.36E-05  & 1.508     \\
                    & \texttt{WhiteNoise}       & 1.385                    & 0.000243                & 0.682                    & 0.088                   & 5.447                    & 0.0075                  & 70.713                   & 0.00076 & 8.421 & 6.78E-05    & 2.764            \\ \hdashline
    \multirow{10}{*}{$\mathbf{\ell_\infty}$} & \texttt{CauchyDensity}    & 2.133                    & 0.000162                & 0.116                    & 0.106                   & 1.541                    & 0.0104                  & 14.172                   & 0.00105     & 6.613 & 3.76E-05       & 2.395     \\
                      & \texttt{ClassicImages}    & 2.541                    & 0.000198                & 0.323                    & 0.095                   & 5.101                    & 0.0091                  & 55.918                   & 0.00093           & 7.621 & 4.37E-05 & 2.541     \\
                      & \texttt{GRFmoderate}      & 2.336                    & 0.000154                & 0.240                    & 0.097                   & 3.914                    & 0.0094                  & 41.214                   & 0.00095     & 7.727 & 3.75E-05 & 2.493            \\
                      & \texttt{GRFrough}         & 3.729                    & 0.000179                & 0.695                    & 0.088                   & 8.505                    & 0.0083                  & 96.955                   & 0.00084            & 8.203 & 4.62E-05 & 2.500     \\
                      & \texttt{GRFsmooth}        & 1.962                    & 0.000202                & 0.182                    & 0.103                   & 3.216                    & 0.0100                  & 31.678                   & 0.00101          & 7.125 & 4.30E-05 & 2.525       \\
                      & \texttt{LogGRF}           & 1.539                    & 0.000175                & 0.122                    & 0.106                   & 1.615                    & 0.0104                  & 15.427                   & 0.00105           & 7.376 & 4.44E-05  & 2.414     \\
                      & \texttt{LogitGRF}         & 1.801                    & 0.000232                & 0.200                    & 0.098                   & 3.947                    & 0.0095                  & 43.279                   & 0.00096          & 7.699 & 4.12E-05    & 2.558   \\
                      & \texttt{Microscopy} & 1.470                    & 0.000155                & 0.158                    & 0.099                   & 2.321                    & 0.0096                  & 22.898                   & 0.00097           & 4.201 & 4.12E-05 & 2.737       \\
                      & \texttt{Shapes}           & 1.019                    & 0.000120                & 0.108                    & 0.095                   & 1.143                    & 0.0092                  & 9.263                    & 0.00092           & 1.752 & 2.47E-05    & 1.386  \\
                      & \texttt{WhiteNoise}       & 3.320                    & 0.000264                & 0.766                    & 0.084                   & 9.376              & 0.0077                  & 124.207                  & 0.00079    & 7.856 & 4.28E-05  & 2.565     \\ \hline
\end{tabular}
}
\caption{Detailed breakdown of the performance of PDOT, Sinkhorn (at various tolerance levels), and Gurobi (barrier and simplex method) across different norms of cost matrix under resolution $32\times 32$. Numbers in the parentheses stand for the selection of penalty parameter in Sinkhorn algorithm.}
\label{tab:32}
\end{table}

\end{document}